
\documentclass[twoside,11pt]{article} 



\usepackage[lofdepth,lotdepth]{subfig}
\usepackage{graphicx}
\usepackage{amsmath}
\usepackage{amssymb}
\usepackage{amsthm}
\usepackage{epsfig}
\usepackage{bbm}
\usepackage[numbers]{natbib}
\usepackage{fullpage, amsfonts, algorithm, algorithmic}
\usepackage{graphicx,color}
\usepackage{lmodern}


\usepackage{float}
\usepackage{multirow}
\usepackage{multicol}
\usepackage{array}
\usepackage{url}
\usepackage{hyperref}
\usepackage{rotating}
\newtheorem{theorem}{Theorem}





\begin{document}






\title{The All-Pairs Vitality-Maximization (VIMAX) Problem}
\author{Alice Paul  \\
alice\_paul@brown.edu, Brown University \\
 \\
  Susan E. Martonosi \\
  martonosi@g.hmc.edu, Harvey Mudd College }

\maketitle

\begin{abstract}
Traditional network interdiction problems focus on removing vertices or edges from a network so as to disconnect or lengthen paths in the network; network diversion problems seek to remove vertices or edges to reroute flow through a designated critical vertex or edge.  We introduce the \textit{all-pairs vitality maximization problem (VIMAX)}, in which vertex deletion attempts to maximize the amount of flow passing through a critical vertex, measured as the all-pairs vitality of the vertex.  The assumption in this problem is that in a network for which the structure is known but the physical locations of vertices may not be known (e.g. a social network), locating a person or asset of interest might require the ability to detect a sufficient amount of flow (e.g., communications or financial transactions) passing through the corresponding vertex in the network.  We formulate VIMAX as a mixed integer program, and show that it is NP-Hard.  We compare the performance of the MIP and a simulated annealing heuristic on both real and simulated data sets and highlight the potential increase in vitality of key vertices that can be attained by subset removal. We also present graph theoretic results that can be used to narrow the set of vertices to consider for removal.  
\end{abstract}




%


\section{Introduction}

Network disruption has important applications to infrastructure design \citep{BCSW2006,MM1970,CSM2004}, energy transmission \citep{CNSW, Hol}, robust network design \citep{CLMR, DWS, Est}, biological systems \citep{RastiVogiatzis_Networks_2022}, illicit trade networks \citep{AnzoomEtAl_IISE_2021}, and counterterrorism \citep{Bas, Sag:2004}.  Much of this work focuses on three primary problem types:  1) network flow interdiction, in which an attacker is trying to decrease the flow capacity of the network by interdicting vertices or edges such that the maximum flow between a source and sink is minimized (e.g., \cite{AEU2010, BaW, BertsimasNasrabadiOrlin, LeiShenSong, LS2007, RoW,W1993, Enayaty-AhangarRainwaterSharkey}); 2) shortest path interdiction, in which an attacker interdicts vertices or edges such that the shortest path between a source and sink is maximized (e.g., \cite{IW2002, PayMerrickSong, ZhangZhuangBehlendorf}); and 3) network diversion, in which a minimum cost, minimal cutset of edges is identified such that when removed, any source-sink path in the network is forced to travel through a particular set of critical edges (e.g., \cite{CCDEGOQ, CullenbineEtAl2013, Cur}).

Of interest in this paper is the concept of vertex (equivalently, edge) \textit{vitality}, which measures the reduction in the maximum flow between the source and sink when that vertex (or edge) is removed from the graph \citep{AusielloEtAl2019, KoschutzkiEtAl2005}.  A vertex having high vitality is needed to achieve a high volume of flow from source to sink, and as such, this vertex will have a high volume of flow passing through it when the maximum flow is achieved.  We define the \textit{all-pairs vitality} of a vertex $v$ to be the summed reduction in the maximum flow between all pairs of nodes (themselves excluding vertex $v$), when vertex $v$ is removed from the graph.

We present the following combinatorial optimization problem, the \textit{all-pairs vitality maximization problem (VIMAX)}:  Given a connected, directed, general capacity graph $G=(V,E)$ with vertex set $V$, edge set $E$, and a key vertex of interest, $k$, identify a subset of vertices $S$, whose removal from the graph $G$ maximizes the all-pairs vitality of $k$. This problem was first introduced in the second author's unpublished manuscript for the specific context of undirected, unit-capacity graphs, for which the maximum flow between a pair of vertices represents the number of edge-disjoint paths between that pair \cite{NetworkDisruption}.

VIMAX can be considered a network disruption problem that is distinct from the three forms outlined above. Covert organizations, such as terrorist groups or drug cartels, tend to communicate along longer paths that are difficult to trace,  suggesting a trade-off between efficiency and secrecy that could render path-length-based attacks ineffective \citep{AnzoomEtAl_IISE_2021, FBW, MGP}.  Moreover, we leverage the possibility that critical vertices in certain types of networks can become vulnerable if they are forced to become more active. (As an example, Osama bin Laden and, subsequently, Ayman al-Zawahiri were known to be leaders of the al-Qaeda terrorist network, yet they remained in hiding for many years before U.S. intelligence could pinpoint their geographic locations.)  If we assume the volume of communication, money, or illicit substances passing through a vertex is a proxy for that corresponding member's visibility to intelligence officers, and communication between pairs of members in the organization is proportional to path capacity, then VIMAX can identify members of the organization whose removal will maximize communication through an important but clandestine leader. Unlike in network diversion problems, we do not require all flow in the remaining graph to be routed through this vertex (indeed in a network diversion problem, the volume of flow passing through the critical vertex might be quite small after vertex or edge removal); instead we seek to \textit{maximize} the total flow routed through this vertex.

In this paper, we examine VIMAX from both computational and theoretical perspectives.  In Section~\ref{sec:lit}, we frame this work in the context of the existing literature.  In Section \ref{sec:optimization}, we define VIMAX, present it as a mixed integer linear program, and demonstrate that it is NP-Hard. Section \ref{sec:SA} presents a simulated annealing heuristic for solving VIMAX.  The computational performance of these two methods is compared in Section \ref{sec:compAnal}. Section \ref{ch:Vertex_Properties} presents mathematical properties of VIMAX that can be leveraged to streamline computations. Section \ref{sec:conc} provides future extensions of this work and concludes.

\section{Literature review}
\label{sec:lit}

We first contrast the network interdiction and diversion problems commonly seen in the literature with the VIMAX problem we will present in this paper. We then discuss the relationship between vitality and other graph centrality metrics.  Finally, we present research on optimization approaches that could be useful to the problem of vitality maximization.  

\subsection{Network Interdiction and Diversion}

Network interdiction models address the logistical problem of removing edges or vertices from a graph to inhibit the flow of resources through a network.  This has applications to military operations and combating drug or human trafficking \cite{KonradTrappPalmbach, TezcanMaass, ZhangZhuangBehlendorf}. Analysis of complex network interdiction typically focuses on disconnecting the network, increasing the lengths of shortest paths, cutting overall flow capacity, or reducing the desirability of paths in the network \citep{AJB, FFV, CavallaroEtAl_2020, GABCH, GCABH, GCLABH, GMR, GMMS, HKYH, HolzmannSmith_2021, MHH, PSS, PayMerrickSong, SLCY, TezcanMaass, WDTZ, ZhangZhuangBehlendorf}.  The most well-known model involves maximum flow network interdiction and its variants \citep{AEU2010, BertsimasNasrabadiOrlin, CMW1998, LeiShenSong, MM1970, P1993, RSL1975, RoW, W1993}.  Of note, \cite{W1993} introduces the ``dualize-and-combine'' method  that is commonly used in network interdiction literature, as well as in this paper. Smith and Song thoroughly survey the network interdiction literature, and demonstrate that the assumptions widely held across the papers they survey make interdiction problems a special case of Stackelberg games \citep{SmithSong2020_EJOR}.

A related problem to network interdiction is the \textit{network diversion problem} in which an attacker seeks to interdict, at minimum cost, a set of edges (equivalently, vertices) such that all source-sink flow must be routed through at least one member of a pre-specified set of ``diversion'' edges or vertices.  This problem was first posed by  \cite{Cur}. 
Applications include military operations, in which it might be beneficial to force a foe to divert its resources through a target edge that is heavily armed; 
and information networks, in which communications are routed through a single edge that can more easily be monitored \citep{LeeChoPark_2019_MOR}.

Cullenbine \textit{et al.} also study the network diversion problem \citep{CullenbineEtAl2013}. They present an NP-completeness proof for directed graphs, a polynomial-time solution algorithm for $s-t$ planar graphs, a mixed integer linear programming formulation that improves upon that given in \cite{Cur}, and valid inequalities to strengthen the formulation.  

Lee \textit{et al.} examine an extension of the network diversion problem known as the \textit{multiple flows network diversion problem} in which there are many source-sink pairs being considered simultaneously \citep{LeeChoPark_2019_MOR}. They define a set $S$ of possible source nodes and $T$ of possible sink nodes.  They are interdicting a \textit{minimum cost} set of edges such that all remaining flow in the network passes through the diversion edge.  They formulate the problem as a mixed integer linear program, and compare its performance to standard combinatorial Benders decomposition and a branch-and-cut combinatorial Benders decomposition.  Without loss of generality, vertex interdiction be formulated as arc interdiction in which each vertex $v$ in the original graph is represented by two vertices $v_i$ and $v_o$ in a modified graph having a single arc between them, $(v_i, v_o)$. Each arc $(u, v)$ in the original graph is then transformed to a corresponding arc $(u_o, v_i)$ in the modified graph. Interdicting this arc in the modified graph is equivalent to interdicting the vertex in the original graph. For undirected graphs, the graph is first transformed into a directed one before doing the transformation. 

There are several aspects of \cite{LeeChoPark_2019_MOR} worth noting as they connect to our work.  First, after the interdiction set is removed from the graph, there is no guarantee that the total flow passing through the diversion edge is particularly large.  In the vitality maximization problem that we present here, we are identifying an interdiction set of vertices such that the flow through the target vertex is maximized, thus ensuring that the target being surveilled has ample flow. Although our formulation does not associate a cost with each vertex that is interdicted, it is disadvantageous for the removal subset to be very large, as that would inherently cause the flow through the target vertex to drop.  Second, we adopt their testing scheme of examining the performance of the algorithms we develop on grid networks (planar), 
as well as random $G_{n,m}$ graphs \cite{knuth2014art}, and a drug trafficking network \cite{natarajan2000}.

A question conversely related to network interdiction and diversion is that of network resilience and detection of attacks. Sharkey \textit{et al.} survey literature on four types of resilience: robustness, rebound, extensibility, and adaptability, with a primary focus on research addressing network robustness and the ability of a network to rebound following an attack \citep{SharkeyEtAl2021}.  Dahan \textit{et al.} study how to strategically locate sensors on a network to detect network attacks \cite{DahanEtAl_2022}.

\subsection{Vitality and Other Graph Centrality Measures}

Vitality is one of several types of graph centrality metrics.  Centrality metrics quantify the importance of a given vertex in a network.  The book of Wasserman and Faust provides a detailed examination of social network analysis stemming from the field of sociology and includes discussion of many commonly known centrality metrics, including degree, betweenness, and closeness \citep{Waf}.  The survey of Rasti and Vogiatzis presents centrality metrics commonly used in computational biology \citep{RastiVogiatzis_AnnalsOR_2019}.  

The degree of a vertex is the number of neighbors it has.  The betweenness of a vertex is the number of shortest paths between all pairs of vertices on which the vertex lies.  Closeness measures the average shortest path length between the vertex and all other vertices in the graph. Vogiatzis \textit{et al.} present mixed integer programming formulations for identifying groups of vertices having the largest degree, betweenness, or closeness centrality in a graph \citep{VogiatzisEtAl_OptLetters_2015}.

Stephenson and Zelen first proposed \textit{information centrality} and applied it to a network of men infected with AIDS in the 1980s \citep{StephensonZelen_1989_SocialNetworks}.  They are among the first to develop a centrality metric that does not require an assumption that information must flow along shortest paths.   They use the theory of statistical estimation to define the information of a signal along the path to be the reciprocal of the variance in the signal.  Assuming the noise induced along successive edges of a path is independent, the variance along each path is additive, and the total variance in the signal grows with the path length.  They then use this assumption to evaluate the total information sent between any pair of vertices $(s,t)$.  From here, they define the centrality of a vertex $i$ to be the harmonic average of the sum of the inverses of the information sent from from vertex $i$ to every other vertex.  They point out that ``information $\ldots$ may be intentionally channeled through many intermediaries in order to `hide' or `shield' information in a way not captured by geodesic paths.''  This appears to be the case in terrorist and other covert networks as well \citep{CKS}. 

Centrality metrics can be used to guide network disruption approaches. Cavallaro \textit{et al.} show that targeting high betweenness vertices efficiently reduces the size of the largest connected component in a graph based on a Sicilian mafia network \cite{CavallaroEtAl_2020}.  Grassi \textit{et al.} find that betweenness and its variants can be used to identify leaders in criminal networks \cite{GrassiEtAl_2019}.

There also exist centrality measures related to network flows, as surveyed in \citep{KoschutzkiEtAl2005}.  In particular, for any real-valued function on a graph, Kosch\"{u}tzki \textit{et al.} define the \textit{vitality} of a vertex (or edge) to be the difference in that function with or without the vertex (or edge). When the function represents the maximum flow between a pair of vertices, the \textit{vitality} of a vertex $k$ in a graph (equivalently, an edge $u$) with respect to an $s-t$ pair of vertices is defined to be the reduction in the maximum flow between $s$ and $t$ when vertex $k$ (equivalently, edge $u$) is removed from the graph. Moreover, when one examines the same reduction in maximum flow in the network over all possible $s-t$ pairs with respect to a given vertex, we have what Freeman \textit{et al.} define as \textit{network flow centrality} \citep{FBW}, or what we refer to as \textit{all-pairs vitality} in this paper.  

The \textit{most-vital edge} or component is the one whose removal decreases the maximum flow through the network by the greatest amount. Identifying the most-vital edge in a network is a long-studied problem dating back to the work of  \cite{CorleyChang1974},  \cite{Wollmer1963}, and \cite{RSL1975}. More recent examination includes the work of \cite{AldersonEtAl2013}, who formulate a mathematical program to maximize resilience, using a defender-attacker-defender model.  They additionally cite several applications for the most-vital edge problem including electric power systems, supply chain networks, telecommunication systems, and transportation.  Ausiello \textit{et al.} provide a method for calculating the vitality of all edges (with respect to a given $s$ and $t$)  with only $2(n-1)$ maximum flow computations, rather than the $m$ computations expected if one were to calculate the vitality of each edge individually \citep{AusielloEtAl2019}.  None of the found literature pertaining to vitality focuses on the problem presented here: that of identifying a set of removal vertices to maximize the vitality of a key vertex (VIMAX).

\section{Optimization framework} \label{sec:optimization}

We will show that VIMAX can be formulated as an integer linear program.  We start by presenting terminology that will be used in the paper.

\subsection{Definitions}

We consider a connected, directed graph $G=(V,E)$ with vertex set $V$, edge set $E$, and a key vertex of interest, $k$.  Each edge $(i,j)$ has a capacity $u_{ij}$ reflecting the maximum amount of flow that can be pushed along that edge.  
The graph has a \textit{key vertex}, $k$, which could represent, for example, an important but elusive participant in an organization. The \textit{vitality maximization problem (VIMAX)} seeks to identify a subset of vertices whose removal from the graph $G$ maximizes the all-pairs vitality of $k$. 
Thus, the objective is to identify a set of vertices to remove from the graph to make the key vertex $k$ as ``active'' as possible by forcing flow to pass through that vertex.  

For any source-sink $s$-$t$ pair, let $z_{st}(G)$ be the value of the maximum $s$-$t$ flow in graph $G$.  We call $Z_k(G)$ the \textit{flow capacity of graph $G$ with respect to vertex $k$}, which is the all-pairs maximum flow in $G$ that does not originate or end at $k$.  Thus,

\begin{equation} \label{eqn:commcap}
Z_k(G) = \sum_{\substack{s,t \in V \setminus \{k \} \\ s \neq t}} z_{st}(G).
\end{equation}

The \textit{all-pairs vitality of $k$}, $\mathcal{L}_k(G)$, equals the flow capacity of the graph with respect to $k$ \textit{minus} the flow capacity with respect to $k$ of the subgraph $G \setminus \{k\}$ obtained when vertex $k$ is deleted:

\begin{equation} \label{eqn:vitality}
\mathcal{L}_k(G) = Z_k(G)-Z_k(G \setminus \{ k \}).
\end{equation}

To measure how the removal of a subset of vertices impacts the vitality of the key vertex, we define the \textit{vitality effect} of subset $S$ on key vertex $k$ to be the change in the key vertex $k$'s vitality caused by removing subset $S$: $\mathcal{L}_k(G\setminus S) - \mathcal{L}_k(G )$. If the vitality effect of $S$ on $k$ is positive, then removing subset $S$ from the graph has diverted more flow through $k$, a desired effect.

The goal of this research is to identify the subset of vertices $S$ 
that maximizes the vitality effect, which is equivalent to maximizing the value of $\mathcal{L}_k(G \setminus S)$.  We formally define the \textit{all-pairs vitality maximization problem} (VIMAX) as \begin{equation} \label{eqn:VIMAX}
max_{ S \subseteq V} \ \mathcal{L}_k(G \setminus S).
\end{equation} 

From expressions (\ref{eqn:commcap}) and (\ref{eqn:vitality}), we see that there is no guarantee that the vitality effect on $k$ of removing any subset $S$ need ever be positive.  When subset $S$ is removed from the graph, the overall flow capacity $Z_k(G\setminus S)$ generally decreases, and never increases, because $S$'s contribution to the flow is removed.  In order for subset $S$'s removal to have a positive vitality effect on key vertex $k$, the remaining flow must be rerouted through $k$ in sufficiently large quantities to overcome the overall decrease in flow through the network.  However, as we will show in Section \ref{subsec:results}, identification of an optimal or near-optimal removal subset often dramatically increases the vitality of the key vertex.

\subsection{Mixed Integer Linear Programming Formulation}
\label{ss:MILP}

To formulate VIMAX as an optimization problem, we first formulate a linear program to solve for the vitality of $k$ in any graph $G$.  Then we expand that formulation into a mixed integer programming formulation that seeks the optimal subset $S$ of vertices to remove from the graph to maximize the vitality of $k$ in the resulting graph.  

\subsubsection{Vitality Max-Flow Subproblems.}

 Following the approach of \cite{IW2002}, 
we take the dual of problem $Z_{k}(G\setminus \{k\})$ to convert it into a minimum cut problem having the same optimal objective function value, and embed it in the formulation of $\mathcal{L}_k(G)$.  Since the dual problem is a minimization problem, the objective function will correctly correspond to the vitality. 
Letting $V' = V \setminus \{k\}$, and letting $E'$ be the set of edges that remain after removing vertex $k$ and its incident edges, we obtain the following linear program for finding $\mathcal{L}_k(G)$:

\begin{equation}
\begin{array}{ll}
\textrm{Maximize} & \displaystyle\sum\limits_{\substack{s,t \in V'\\ s \neq t}} v_{s,t} - \displaystyle\sum\limits_{\substack{s,t \in V' \\ s \neq t}} \displaystyle\sum\limits_{(i,j) \in E'} u_{i,j}y_{i,j,s,t} \\
\textrm{subject to} & \\
& \displaystyle\sum\limits_{j:(i,j) \in E} x_{i,j,s,t} - \displaystyle\sum\limits_{j':(j',i) \in E} x_{j',i,s,t} = \begin{cases} v_{s,t} &\mbox{if } i = s \\ -v_{s,t} &\mbox{if } i = t \\ 0 &\mbox{otherwise} \end{cases}  \forall i \in V , \forall s,t \in V'\\
& \\
& x_{i,j,s,t} \leq u_{i,j} , \forall (i,j) \in E , \forall s,t \in V'\\
& y_{i,s,t} - y_{j,s,t} + y_{i,j,s,t} \geq 0 , \forall (i,j) \in E'  , \forall s,t \in V'\\
& -y_{s,s,t} + y_{t,s,t} \geq 1  , \forall s,t \in V' \\
& \\
& v_{s,t} \geq 0  , \forall s,t \in V'\\
& x_{i,j,s,t} \geq 0  , \forall (i,j) \in E , \forall s,t \in V'\\
& y_{i,j,s,t} \geq 0  , \forall (i,j) \in E'  , \forall s,t \in V' \\
& y_{i,s,t} \textrm{ unrestricted}, \forall i,s,t \in V'. \\
\end{array}
\label{VitalityOptimizationProgram}
\end{equation}

Variables $x_{i,j,s,t}$ and $v_{s,t}$ are the primal variables from the maximum flow formulation of problem $Z_k(G)$.  $x_{i,j,s,t}$ represent the optimal $s-t$ flow pushed along edge $(i,j)$, and $v_{s,t}$ represent the optimal $s-t$ flow values. Variables $y_{i,s,t}$ and $y_{i,j,s,t}$ are the dual variables from the minimum cut formulation of problem $Z_k(G\setminus \{k\})$. We can interpret $y_{i,s,t}$ as vertex potentials: For every edge $(i,j)$, if $y_{i,s,t} < y_{j,s,t}$, meaning vertex $i$ has lower potential than vertex $j$ when computing the minimum $s-t$ cut, then edge $(i,j)$ must cross the cut. In such a case, dual variable $y_{i,j,s,t}=1$, and edge capacity $u_{i,j}$  is counted in the objective function.

\subsubsection{VIMAX: Choosing an Optimal Removal Subset.}

Now that we have expressed the vitality of $k$ in $G$ as a linear program, we can return to VIMAX, which finds a subset $S$ of vertices whose removal maximizes the vitality of $k$.  Given a set $S$, the linear program in 
Equation~\ref{VitalityOptimizationProgram} applied to graph $G \setminus S$ solves for $\mathcal{L}_{k}(G \setminus S)$.  We must modify the LP above to choose a subset $S$ that maximizes the objective function $\mathcal{L}_k(G\setminus S).$

We can formalize this by creating binary variables $z_i$ for each vertex $i$ such that $z_i=1$ if vertex $i$ remains in the graph, and $z_i =0$ if vertex $i$ is removed from the graph (that is, $i$ is included in subset $S$). 
We also define variables $w_{i,j}$ for each edge that indicate whether or not edge $(i,j)$ remains in the graph following the removal of $S$ and/or $k$.  We define linking constraints so that whenever both vertices $i$ and $j$ remain in the graph (that is, $z_i=z_j=1$), then $w_{i,j}$ must equal $1$, and whenever either vertex $i$ or $j$ is selected for deletion (that is, $z_i = 0$ or $z_j =0$ or both) then $w_{i,j}$ must equal $0$.  (Due to this relationship between $w_{i,j}$ and the binary $z_i$, the $w_{i,j}$ are effectively constrained to be binary variables without explicitly declaring them as such.)  

To  Equation \ref{VitalityOptimizationProgram}, we make the following adjustments to the original primal and dual constraints.  We  constrain the primal flow variables $x_{i,j,s,t} \leq u_{i,j}w_{i,j},$ reflecting whether or not edge $(i,j)$ remains in the graph.  We also modify the dual potential constraints so that $y_{i,j,s,t}=0$ whenever vertices $i$ and $j$ are at the same potential (as before) or edge $(i,j)$ no longer exists in the graph.  

Introducing the variables $z_i$ and $w_{i,j}$ and the modifications on our vitality constraints, we can now write the full mixed-integer linear program. Given a graph $G = (V,E)$, a key vertex $k$, and a maximum size, $m$, of the removal set, the following mixed-integer linear program solves VIMAX.

\begin{equation}
\begin{array}{ll}
\textrm{Maximize} & \displaystyle\sum\limits_{\substack{s,t \in V'\\ s \neq t}} v_{s,t} - \displaystyle\sum\limits_{\substack{s,t \in V' \\ s \neq t}} \displaystyle\sum\limits_{(i,j) \in E'}  u_{i,j}y_{i,j,s,t} \\
\textrm{subject to} & \\

& \displaystyle\sum\limits_{\substack{i \in V}} z_i \geq n-m \\
& z_k = 1 \\
& w_{i,j} \leq z_i,  \forall (i,j) \in E \\
& w_{i,j} \leq z_j, \forall (i,j) \in E\\
& w_{i,j} \geq z_i + z_j -1,  \forall (i,j) \in E \\

& \\

& \displaystyle\sum\limits_{j:(i,j) \in E} x_{i,j,s,t} - \displaystyle\sum\limits_{j':(j',i) \in E} x_{j',i,s,t} = \begin{cases} v_{s,t} &\mbox{if } i = s \\ -v_{s,t} &\mbox{if } i = t \\ 0 &\mbox{otherwise} \end{cases} \forall i \in V, \forall s,t \in V'\\
& \\
& x_{i,j,s,t} \leq u_{i,j}w_{i,j} , \forall (i,j) \in E , \forall s,t \in V' \\
& y_{i,s,t} - y_{j,s,t} + y_{i,j,s,t} \geq -(1-w_{i,j}) , \forall (i,j) \in E', \forall s,t \in V'\\
& -y_{s,s,t} + y_{t,s,t} \geq 1  , \forall s,t \in V' \\

& \\

& z_i \textrm{ binary, } \forall i \in V \\
& w_{i,j} \geq 0, \forall (i,j) \in E\\
& v_{s,t} \geq 0  , \forall s,t \in V'\\
& x_{i,j,s,t} \geq 0  , \forall (i,j) \in E , \forall s,t \in V'\\
& y_{i,j,s,t} \geq 0  , \forall (i,j) \in E'  , \forall s,t \in V' \\
& y_{i,s,t} \textrm{ unrestricted, } \forall i,s,t \in V' \\
\end{array}
\label{FinalOptimizationProgram}
\end{equation}

Extending the approach of  \cite{OvaThesis} to general capacity, directed graphs, we can show that VIMAX is NP-Hard.  In the case that $m=1$ and we can remove at most one vertex, we can do brute-force and solve the above MIP setting $z_i = 0$ and all other $z_j = 1$ for all $i \in V'$. 

\begin{theorem} \label{NPHard}
The all-pairs vitality maximization problem is NP-Hard.
\end{theorem}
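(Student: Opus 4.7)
The plan is to establish NP-hardness via a polynomial-time reduction from a well-known NP-hard problem, following and extending the blueprint of the unit-capacity undirected argument sketched in \cite{OvaThesis}. I would first isolate the decision version DEC-VIMAX: given $(G,k,m,B)$, does there exist $S\subseteq V\setminus\{k\}$ with $|S|\le m$ such that $\mathcal{L}_k(G\setminus S)\ge B$? Membership in NP is immediate: given a candidate $S$, one verifies the bound with $O(|V|^2)$ max-flow computations to evaluate $Z_k(G\setminus S)$ and $Z_k((G\setminus S)\setminus\{k\})$, each polynomial in the input size.

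For the hardness direction I would reduce from CLIQUE (equivalently, INDEPENDENT SET). Given an instance $(H=(V_H,E_H),q)$ of CLIQUE, I would construct a directed capacitated graph $G$ containing: a designated key vertex $k$ placed on a narrow, low-capacity $s^\ast$–$t^\ast$ channel; a vertex gadget for each $v\in V_H$; and an edge gadget for each $\{u,v\}\in E_H$ that provides a high-capacity $s^\ast$–$t^\ast$ bypass route avoiding $k$, but only while both endpoint gadgets remain intact. Capacities would be tuned so that removing a set $S\subseteq V_H$ of size $q$ disables all bypass routes exactly when $S$ forms a clique in $H$, thereby forcing flow through $k$ and driving $\mathcal{L}_k(G\setminus S)$ above a threshold $B$ that a non-clique $S$ of the same size cannot reach. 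Since any unit-capacity undirected graph is trivially a special case of a directed general-capacity graph (replace each undirected edge by antiparallel unit arcs), the construction subsumes the OvaThesis setting; the novelty is only in verifying that directionality and larger capacities do not introduce extra slack.

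The correctness argument then splits into two parts: (i) if $H$ contains a $q$-clique $C$, then removing the corresponding gadget vertices eliminates enough bypass capacity to raise the vitality of $k$ by the prescribed amount; (ii) conversely, any $S$ with $|S|\le m$ and $\mathcal{L}_k(G\setminus S)\ge B$ must, by a pigeonhole argument on the bypass gadgets, leave $V_H\setminus S$ inducing a subgraph containing a $q$-clique. Both directions reduce ultimately to counting how many edge gadgets of $H$ are destroyed by the removal, which is maximized precisely on cliques.

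The main obstacle I expect is that $\mathcal{L}_k(G\setminus S)$ sums max-flow reductions over \emph{all} source–sink pairs in $V\setminus\{k\}$, not just $(s^\ast,t^\ast)$. A careless construction would let vitality accumulate from unintended pairs and obscure the clique signal. I would neutralize this by padding $G$ with high-capacity parallel routes between every pair of non-tracker vertices that bypass $k$ entirely and remain intact under any removal of size $\le m$, ensuring that every non-$(s^\ast,t^\ast)$ pair contributes zero to the vitality effect. Once this "all-pairs noise" is controlled, the remaining argument reduces to the single-pair OvaThesis-style analysis, and only the verification that the new directed, general-capacity gadgets preserve the intended min-cut structure — essentially a routine check of capacity arithmetic — remains.
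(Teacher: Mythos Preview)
Your reduction sketch has a genuine inconsistency at its core. You describe the edge gadget for $\{u,v\}\in E_H$ as providing a bypass ``only while both endpoint gadgets remain intact,'' which means the bypass is destroyed as soon as \emph{either} $u$ or $v$ is removed. Under that semantics the number of bypasses destroyed by removing $S$ is the number of edges of $H$ incident to $S$; this is a vertex-cover-type quantity and is \emph{not} maximized on cliques, so the claim ``maximized precisely on cliques'' is false as written. If you instead intend the bypass to survive while at least one endpoint remains (destroyed only when both are removed), then the destroyed count becomes $|E(H[S])|$ and the clique claim is correct---but that is the opposite of what your gadget description says. A second, independent gap: destroying bypass capacity does not automatically raise $\mathcal{L}_k$. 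If the $s^\ast$--$t^\ast$ min-cut sits at the bypass-plus-$k$-channel layer both before and after removal, then $z_{s^\ast t^\ast}(G\setminus S)$ and $z_{s^\ast t^\ast}((G\setminus S)\setminus\{k\})$ drop by the same amount and the vitality is unchanged. You need an explicit upstream bottleneck (e.g.\ capping the out-capacity of $s^\ast$) so that flow with $k$ present stays pinned while flow without $k$ falls; without it, part~(i) of your correctness argument does not go through.

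For contrast, the paper reduces from 3SAT, not CLIQUE. Variable gadgets $\{a_i,t_i,f_i,b_i\}$ and clause gadgets $\{u_j,v_j\}$ sit between terminals $d_1,d_2$, with $k$ on the $d_2$ side; the removal set encodes a truth assignment (remove $t_i$ if $x_i$ is false, $f_i$ if true), and one checks that $\mathcal{L}_k^{d_1,d_2}(G\setminus S)\ge n+m$ holds exactly when every clause is satisfied. The all-pairs noise is handled not by padding other pairs down to zero but by attaching $M$ leaves to each of $d_1$ and $d_2$, so the $(d_1,d_2)$ contribution is effectively counted $(M+1)^2$ times and dominates the sum; a short bound shows all remaining pairs contribute at most $\tfrac12(M+1)^2$. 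This amplification trick avoids the interference risk in your padding idea: the high-capacity routes you propose between non-tracker vertices could themselves participate in $s^\ast$--$t^\ast$ paths and corrupt the intended min-cut structure, a difficulty you acknowledge but do not resolve.
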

\begin{proof}

The proof of this can be found in Appendix \ref{sec:AppNPHard}.

\end{proof}

\section{Simulated Annealing Heuristic}
\label{sec:SA}

As an alternative to solving VIMAX exactly with a MIP, we develop a simulated annealing heuristic.  Each iteration of simulated annealing begins with a candidate removal subset.  In the first iteration, this is the empty set, and in subsequent iterations the initial solution is the best solution found at the conclusion of the previous iteration.  The objective function value of each solution is computed as the vitality of the key vertex when this subset is removed from the graph.  Each call to the algorithm consists of an \textit{annealing phase} and a \textit{local search phase}.  

During the annealing phase, neighboring solutions of the current solution are obtained by toggling a single vertex's, or a pair of vertices', inclusion or exclusion from the candidate removal subset, subject to the constraint that $|S| \leq m$.  If the neighboring solution improves the objective function value, it is automatically accepted for consideration.  If the neighboring solution has a worse objective function value, it will be accepted to replace the current solution with an \textit{acceptance probability} governed by a temperature function, $T$. When the temperature is high (in early iterations), there is a high probability of accepting a neighboring solution even if its objective function value is worse than that of the incumbent solution.  This permits wide \textit{exploration} of the solution space.  In later iterations, the temperature function cools, reducing the likelihood that lower objective function value solutions will be considered. This permits \textit{exploitation} of promising regions of the solution space.

Given temperature $T$, the probability of accepting a solution having objective function value $e_0$ when the best objective function value found so far is $e_{max} > e_0$ is given by $P = e^{-(e_{max}-e_0)/T}$.  The initial temperature, $T$, is chosen so that the acceptance probability of a solution having at least 90\% of the initial objective function value is at least 95\%.  In subsequent iterations, $T$ is cooled by a multiplicative factor of $0.95$.

After a set number of annealing iterations, a single iteration of local search is conducted on the best solution found so far by toggling each vertex sequentially to determine if its inclusion or exclusion improves the objective function value.  The best solution found is returned.

We use a Gomory-Hu tree implementation of the all-pairs maximum flow problem to rapidly calculate the vitality of the key vertex on each modified graph encountered by the heuristic \citep{GH1961, G1990}.  For mathematical reasons that are discussed in Section \ref{ch:Vertex_Properties}, we can exclude leaves from consideration in any removal subset.  These two enhancements permit the simulated annealing heuristic to run very fast on even large instances, as we discuss in Section \ref{subsec:results}.

\section{Computational Analysis}
\label{sec:compAnal}

We now present performance comparisons on a variety of datasets of the MIP formulation 
and the simulated annealing heuristic.  Following the approach of \cite{LeeChoPark_2019_MOR}, we generate grid networks, 
which are planar.  We also test the performance of the methods on random networks \cite{knuth2014art} and on a real drug trafficking network \citep{natarajan2000}.  We first describe these data sets and the computational platform used,  and then we present the results. Code and data files are available at our Github repository: \url{https://github.com/alicepaul/network_interdiction}.

\subsection{Data}

\subsubsection{Grid Networks.}

We generate grid networks in a similar fashion as \cite{LeeChoPark_2019_MOR}. We generate square $M \times M$ grids with $M$ varying from five to eight. Such graphs have an edge density of $\frac{4}{M(M+1)}$, which ranges from $13.3\%$ for $M=5$ to $5.6\%$ for $M=8$.
On each grid, we generate edge capacities independently and uniformly at random from the integers from $1$ to $M$.  For each case, we likewise consider two scenarios,  testing a maximum removal subset size of $m=1$ or $m=M$ (that is, $\sqrt{|V|}$).  For each grid size 
and removal subset size combination, we generate three trial graphs.  For each trial, a key vertex is selected uniformly at random over the vertices.

\subsubsection{Random Networks.}
Random $G_{n,m}$ graphs are parametrized by a number of vertices, $n=|V|$, and a number of edges, $m=|E|$ \cite{knuth2014art}. Each graph is sampled by finding a random graph from the set of all connected graphs with $n$ nodes and $m$ edges. We test our methods on graphs having the same number of vertices and same number of edges as the grid networks above: $|V| = \{25, 36, 49, 64\}$ vertices, with $|E| = \{40, 60, 84, 112\}$, respectively. On each graph, we generate edge capacities independently and uniformly at random from the integers from $1$ to $\sqrt{|V|}$.  For each case, we likewise consider two scenarios,  testing a maximum removal subset size of $m=1$ or $m=\sqrt{|V|}$.  For each graph size 
and removal subset size combination, we generate three trial graphs.  For each trial, the key vertex is selected to have the highest betweenness centrality.

\subsubsection{Drug Trafficking Network.}

Lastly, we test our models on a real-world covert cocaine trafficking group, prosecuted in New York City in 1996 \citep{natarajan2000}.  This network consists of 28 people between whom 151 phone conversations were intercepted over wiretap over a period of two months.  An edge exists between persons $i$ and $j$ if at least one conversation between them appears in the data set. There are 40 edges in this graph, corresponding to an edge density of $10.6\%$.  We can consider a unit capacity version of the network, as well as a general capacity version in which the capacity on edge $i-j$ is equal to the number of conversations between them appearing in the data.  The weighted network is shown in Figure \ref{fig:drugnet}, where line width is proportional to the number of wiretapped calls occurring between two operatives.  According to Natarajan \textit{et al.}, some individuals in the network are known to have the roles described in Table \ref{table:drugnetroles}.
  We test a maximum removal subset size of $m=1$ or $m =5 \approx \sqrt{|V|}$.  Because the Colombian bosses (vertices 1, 2, and 3) are high-level leaders important to the functioning of the organization, 
  we treat these vertices as the key vertices on which we attempt to maximize vitality.

\begin{figure}[ht]
\begin{center}
\includegraphics[width = \linewidth]{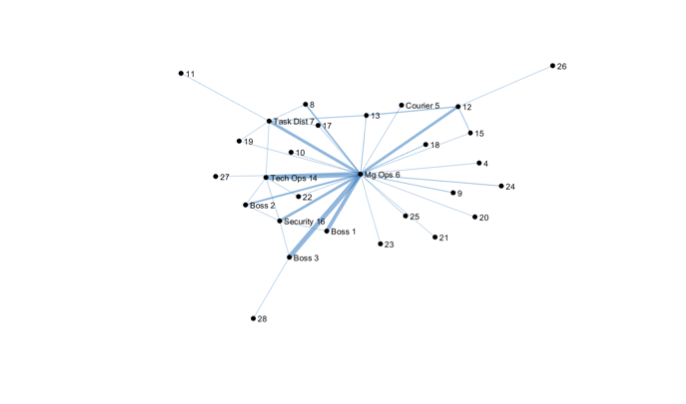}
\caption{Cocaine trafficking network of Natarajan \textit{et al.}  Line width is proportional to number of wiretapped calls made between pairs of operatives \citep{natarajan2000}.}\label{fig:drugnet}
\end{center}
\end{figure}

\begin{table}
\begin{center}
{\small
\begin{tabular}{| c | c |}
\hline
Vertex & Role \\ \hline
1, 2, 3 & Colombian Bosses \\
5 & Courier \\
6 & Managing Operations \\
7 & Task Distribution \\
14 & Technical Operations \\
16 & Security  \\ \hline
\end{tabular} \\
\caption{Roles of notable vertices in the cocaine trafficking network of Natarajan \textit{et al.} \citep{natarajan2000}.}
\label{table:drugnetroles}
}
 \end{center}
\end{table}

\subsection{Computational Framework}


The performance of the MIP and the simulated annealing heuristic was tested on a computer with a 3 GHz 6-Core Intel Core i5 processor and 16 GB of memory. The Single-VIMAX and VIMAX MIP instances were run in python 3.9.6 calling the CPLEX solver through the CPLEX python API, and were each limited to two hours of computation time.  The simulated annealing heuristic was also coded in python and limited to 10,000 iterations on each trial instance. Initial results were collected using the Extreme Science and Engineering Discovery Environment (XSEDE) supercomputers \cite{xsede} and up to five hours of computation time but did not show significantly different results.  In addition to the general VIMAX MIP, a single vertex removal MIP (Single VIMAX) was also tested.  Single vertex removal simulated annealing results are not reported, as they are effectively equivalent to brute force search.

\subsection{Results}
\label{subsec:results}

Table \ref{table:fullVIMAX} presents the results of all completed trials.  The first five columns explain the graph type, number of vertices $(|V|)$, number of edges $(|E|)$ and for the general VIMAX problem allowing multiple removals, the maximum allowed size, $m$, of the removal subset.  Column six gives the initial vitality of the key vertex in the original graph with no vertices removed.  Columns seven through ten provide results on the performance of the single vertex removal MIP (Single VIMAX); columns eleven through fifteen provide results from the multi-removal MIP (VIMAX); and columns sixteen through nineteen provide results from the multi-removal simulated annealing heuristic. (There is no need to use simulated annealing for Single VIMAX because it can be solved by sequentially testing the removal of each vertex.)  For the three methods, the best vitality found within the time or iteration limit, the MIP gap if available, the percentage increase of the best vitality found by the method over the original vitality of the key vertex in the full graph, and the running time in seconds are given.  For the multi-removal methods, the size of the best found removal subset $(|S|)$ is also given.  MIP instances that terminated due to time limit have Time reported as $'-'$.

\begin{sidewaystable}
\begin{center}
{\tiny
\begin{tabular}{|c|c|c|c|c|c||c|c|c|c||c|c|c|c|c||c|c|c|c|}
\hline
\multicolumn{2}{|c|}{}			&		&		&		&		&	\multicolumn{4}{c||}{Single Removal MIP}							&	\multicolumn{5}{|c||}{Multi-Removal MIP}									&	\multicolumn{4}{|c|}{Multi-Removal Simulated Annealing} 							\\ \cline{7-19}
\multicolumn{2}{|c|}{}			&		&		&		&	Orig.	&	Best 	&	MIP	&	\%	&	Time	&	Best 	&	MIP	&	\%	&	Time	&		&	Best Found	&	\%	&	Time	&		\\
\multicolumn{2}{|c|}{Graph Type} 			&	$|V|$	&	$|E|$	&	$m$	&	Vitality	&	Vitality	&	Gap (\%)	&	Incr.	&	(s)	&	Vitality	&	Gap (\%)	&	Incr.	&	(s)	&	$|S|$	&	Vitality	&	Incr.	&	(s)	&	$|S|$	\\ \hline
\multirow{6}*{Drug Network} 	&	\multirow{3}*{unit cap.} 	&	28	&	40	&	5	&	3	&	3	&	0.00	&	0.00	&	15.78	&	8	&	0.00	&	166.67	&	173.38	&	3	&	8	&	166.67	&	182.10	&	3	\\
	&		&	28	&	40	&	5	&	5	&	5	&	0.00	&	0.00	&	11.32	&	8	&	0.00	&	60.00	&	249.92	&	3	&	8	&	60.00	&	185.09	&	3	\\
	&		&	28	&	40	&	5	&	29	&	29	&	0.00	&	0.00	&	9.41	&	31	&	0.00	&	6.90	&	94.11	&	3	&	31	&	6.90	&	210.50	&	3	\\ \cline{2-19}
	&	\multirow{3}*{gen. cap.} 	&	28	&	40	&	5	&	5	&	5	&	0.00	&	0.00	&	13.91	&	5	&	0.00	&	0.00	&	378.17	&	0	&	5	&	0.00	&	196.87	&	3	\\
	&		&	28	&	40	&	5	&	7	&	7	&	0.00	&	0.00	&	11.23	&	7	&	0.00	&	0.00	&	329.90	&	0	&	7	&	0.00	&	225.23	&	1	\\
	&		&	28	&	40	&	5	&	31	&	31	&	0.00	&	0.00	&	9.58	&	31	&	0.00	&	0.00	&	177.60	&	0	&	31	&	0.00	&	264.67	&	0	\\ \hline
\multirow{12}*{Random} 	&	\multirow{3}*{$n = 25$} 	&	25	&	40	&	5	&	0	&	0	&	0.00	&	0.00	&	32.70	&	0	&	0.00	&	0.00	&	1819.28	&	0	&	0	&	0.00	&	244.41	&	0	\\
	&		&	25	&	40	&	5	&	64	&	90	&	0.00	&	40.63	&	25.91	&	135	&	0.00	&	110.94	&	1020.51	&	5	&	115	&	79.69	&	103.65	&	4	\\
	&		&	25	&	40	&	5	&	56	&	73	&	0.00	&	30.36	&	22.04	&	149	&	0.00	&	166.07	&	144.94	&	3	&	149	&	166.07	&	168.73	&	3	\\ \cline{2-19}
	&	 \multirow{3}*{$n=36$} 	&	36	&	60	&	6	&	34	&	34	&	0.00	&	0.00	&	526.01	&	34	&	4548.07	&	0.00	&	-	&	0	&	34	&	0.00	&	543.61	&	0	\\
	&		&	36	&	60	&	6	&	230	&	368	&	0.00	&	60.00	&	533.77	&	693	&	169.68	&	201.30	&	-	&	6	&	859	&	273.48	&	129.36	&	6	\\
	&		&	36	&	60	&	6	&	167	&	304	&	0.00	&	82.04	&	435.19	&	938	&	74.89	&	461.68	&	-	&	5	&	980	&	486.83	&	282.99	&	5	\\ \cline{2-19}
	&	 \multirow{3}*{$n = 49$} 	&	49	&	84	&	7	&	287	&	335	&	0.00	&	16.72	&	6007.47	&	287	&	1900.56	&	0.00	&	-	&	0	&	651	&	126.83	&	229.95	&	7	\\
	&		&	49	&	84	&	7	&	387	&	581	&	0.00	&	50.13	&	4559.45	&	387	&	1632.44	&	0.00	&	-	&	0	&	957	&	147.29	&	452.74	&	6	\\
	&		&	49	&	84	&	7	&	969	&	1,254	&	0.00	&	29.41	&	3519.72	&	969	&	622.02	&	0.00	&	-	&	0	&	2574	&	165.63	&	346.76	&	6	\\ \cline{2-19}
	&	 \multirow{3}*{$n=64$} 	&	64	&	112	&	7	&	151	&	151	&	2377.86	&	0.00	&	-	&	151	&	9144.91	&	0.00	&	-	&	0	&	2225	&	1373.51	&	427.85	&	8	\\
	&		&	64	&	112	&	7	&	581	&	581	&	563.49	&	0.00	&	-	&	581	&	1971.25	&	0.00	&	-	&	0	&	2780	&	378.49	&	897.63	&	8	\\
	&		&	64	&	112	&	7	&	602	&	602	&	492.67	&	0.00	&	-	&	602	&	1887.84	&	0.00	&	-	&	0	&	1619	&	168.94	&	368.37	&	8	\\ \hline
\multirow{12}*{Grid} 	&	\multirow{3}*{$5 \times 5$} 	&	25	&	40	&	5	&	271	&	387	&	0.00	&	42.80	&	75.21	&	559	&	0.00	&	106.27	&	576.68	&	3	&	559	&	106.27	&	186.38	&	3	\\
	&		&	25	&	40	&	5	&	126	&	379	&	0.00	&	200.79	&	54.97	&	472	&	0.00	&	274.60	&	528.42	&	2	&	472	&	274.60	&	195.55	&	2	\\
	&		&	25	&	40	&	5	&	207	&	377	&	0.00	&	82.13	&	58.27	&	432	&	0.00	&	108.70	&	503.27	&	2	&	432	&	108.70	&	202.67	&	2	\\ \cline{2-19}
	&	 \multirow{3}*{$6 \times 6$} 	&	36	&	60	&	6	&	336	&	603	&	0.00	&	79.46	&	1376.30	&	683	&	288.43	&	103.27	&	-	&	2	&	526	&	56.55	&	486.04	&	3	\\
	&		&	36	&	60	&	6	&	79	&	180	&	0.00	&	127.85	&	1219.06	&	293	&	1016.24	&	270.89	&	-	&	6	&	422	&	434.18	&	513.14	&	4	\\
	&		&	36	&	60	&	6	&	222	&	587	&	0.00	&	164.41	&	1071.95	&	1,066	&	195.83	&	380.18	&	-	&	6	&	1178	&	430.63	&	493.66	&	4	\\ \cline{2-19}
	&	 \multirow{3}*{$7 \times 7$} 	&	49	&	84	&	7	&	845	&	1303	&	142.50	&	54.20	&	-	&	845	&	817.34	&	0.00	&	-	&	0	&	2435	&	188.17	&	740.19	&	6	\\
	&		&	49	&	84	&	7	&	651	&	651	&	439.40	&	0.00	&	-	&	651	&	1145.64	&	0.00	&	-	&	0	&	2363	&	262.98	&	743.13	&	5	\\
	&		&	49	&	84	&	7	&	653	&	1446	&	205.77	&	121.44	&	-	&	653	&	1348.54	&	0.00	&	-	&	0	&	3352	&	413.32	&	1224.05	&	3	\\ \cline{2-19}
	&	 \multirow{3}*{$8 \times 8$} 	&	64	&	112	&	7	&	1410	&	2212	&	239.08	&	56.88	&	-	&	1,410	&	1258.79	&	0.00	&	-	&	0	&	6001	&	325.60	&	1592.45	&	7	\\
	&		&	64	&	112	&	7	&	542	&	542	&	1008.47	&	0.00	&	-	&	542	&	2922.76	&	0.00	&	-	&	0	&	1827	&	237.08	&	442.94	&	8	\\
	&		&	64	&	112	&	7	&	380	&	649	&	886.21	&	70.79	&	-	&	380	&	4729.71	&	0.00	&	-	&	0	&	5497	&	1346.58	&	2426.68	&	6	\\ \hline
\end{tabular}} \\
\caption{Computational results of solving VIMAX via mixed integer program (single VIMAX and multi-removal VIMAX) and simulated annealing (multi-removal VIMAX). For the grid and random networks, each row represents a randomly generated instance with randomly selected key vertex.  MIP trials that reached the two-hour time limit show `Time' reported as `-'.  
}
\label{table:fullVIMAX}
\end{center}
\end{sidewaystable}

First we note that Table \ref{table:fullVIMAX} provides a proof-of-concept demonstrating that it is possible to increase (sometimes dramatically) the vitality of the key vertex through subset removal.  Removing a single vertex increased the vitality by 42\%-200\% in all grid network instances for which the MIP solved to optimality within the time limit, and by up to 82\% in the random graph instances; single vertex removal was not able to increase the vitality of the key vertex in the drug network.  When allowing multiple removals, simulated annealing was able to identify removal subsets that increased the vitality on the key vertex by as much as 1,373\%.

Unsurprisingly, the full VIMAX MIP allowing multiple removals is substantially harder to solve than the single removal MIP.  On grid and random networks, the MIP failed to terminate within the two-hour time limit on all instances with at least $n=36$ nodes. On the $n=36$ random and the $7 \times 7$ and $8 \times 8$ grid network instances, the single removal MIP also did not terminate within the time limit, but an improving solution was returned in more cases. The large MIP gaps on the MIP allowing multiple removals indicate a failure to find improving integer solutions. 

For multiple vertex removal, the simulated annealing heuristic yielded excellent solutions in a fraction of the time required  by even the single removal MIP. On the large instances for which the multiple removal MIP reached the time limit, the simulated annealing heuristic found substantially better solutions than the MIP incumbents.  For those instances in which the multiple removal MIP solved to optimality, the solutions found by simulated annealing are often optimal and always near-optimal.

The effectiveness of vertex removal to maximize vitality appears to depend on the network structure and choice of key vertices.  While the drug network has approximately the same number of vertices and edges as the 25-node instances of the random and grid networks, 
the key vertices (corresponding to vertices Boss 1, Boss 2, and Boss 3 in Figure \ref{fig:drugnet}) chosen in these trials are less amenable to vitality maximization.  The drug network has a large number of leaves, whereas the grid networks do not.  As we will see in Section \ref{ch:Vertex_Properties}, vertices, such as leaves, that do not have at least two vertex-disjoint paths to the key vertex will never appear in an optimal removal subset.

Lastly, in these trials, we chose to restrict the removal subset size to at most $m$ vertices.  The reason to restrict the removal subset size is to reduce the solution space, and thus the complexity, of the problem.  This decision is justifiable because we know removing too many vertices will cause overall flow in the network to drop such that the vitality on the key vertex cannot increase.  Thus, an important question is what should be an appropriate value of $m$ to effectively reduce the solution space without compromising the quality of solutions found?  We do not have a definitive answer to this question.  However, we see that in 
many of the trials, the best removal subset identified by any method has a size strictly less than $m \approx \sqrt{|V|}$, suggesting that this choice of $m$ is reasonable for the sizes and types of graphs considered here.

\section{Leveraging Structural Properties of Vitality}
\label{ch:Vertex_Properties}

Thus far, we have established that subset removal can dramatically increase the vitality of a key vertex.  However, solving this problem exactly as a MIP is computationally intractable for even modestly sized graphs.  Fortunately, simulated annealing is an appealing alternative that yields very good solutions in dramatically less time than the MIP.  In this section, we explore mathematical properties that characterize vertices that can be ignored by subset removal optimization approaches.  We demonstrate how these properties can be leveraged to simplify the graph on which VIMAX is run.  

\subsection{Identifying Vitality-\textit{Reducing} Vertices}
\label{ss:vitred}

To reduce the complexity of the optimization formulation, we turn to identifying conditions that cause a vertex to have a vitality-\textit{reducing} effect on the key vertex.  This allows us to ignore such vertices in any candidate removal subset and reduce the solution space of the VIMAX problem.

Our first observation is that the presence of a cycle is necessary for the removal of a vertex to increase the vitality of a key vertex. 
The vitality of a leaf is always equal to $0$, so the removal of any subset that results in $k$ becoming a leaf also cannot increase the vitality of $k$. As a corollary, if $k$ has neighbor set $N(k)$ and more than $|N(k)|-2$ of $k$'s neighbors are removed, the vitality effect on $k$ will be nonpositive.

We can generalize this further.  When there are not at least two vertex-disjoint paths from $i$ to $k$, any removal subset including $i$ will have a vitality effect on $k$ no greater than the same subset excluding $i$, as stated by the following theorem\footnote{A more general cut theorem holds for the specific case of an undirected graph in which all edges in the graph have unit capacity \citep{NetworkDisruption}.  In such a graph, the value of the maximum $s-t$ flow equals the number of edge disjoint paths between $s$ and $t$ in the graph.  In this case, the relationship between the size of the cut between the key vertex $k$ and a candidate for removal, $i$, and the connectivity between vertices along the boundaries of that cut conveys information about the vitality effect on $k$ of removing $i$.  
The reader is also referred to  \cite{paul2012} for an overview of how this theorem might be implemented in practice for unit capacity, undirected graphs.}:

\begin{theorem}
\label{NoCycleThm}  
Let $G$ be a graph with key vertex $k$, and let $i$ be a vertex such that there do not exist at least two vertex-disjoint paths starting at $i$ and ending at $k$. Let $S$ be any vertex subset containing $i$, and let $T = S \setminus \{i\}$. Then, $\mathcal{L}_k(G \setminus S) \leq \mathcal{L}_k(G \setminus T)$. Therefore, $T$ will have at least as large a vitality effect on $k$ as $S$.
\end{theorem}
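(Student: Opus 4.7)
The plan is to reduce to the case $T = \emptyset$: since the $i$-to-$k$ connectivity hypothesis in $G$ is inherited by any subgraph, it suffices to prove $\mathcal{L}_k(H) \ge \mathcal{L}_k(H \setminus \{i\})$ where $H := G \setminus T$. Writing $\mathcal{L}_k(\cdot) = Z_k(\cdot) - Z_k(\cdot \setminus \{k\})$ and expanding $Z_k$ as a sum over ordered source--sink pairs $(u, v)$, the desired inequality rearranges to the supermodularity-style bound
\[
Z_k(H) + Z_k(H \setminus \{i, k\}) \;\ge\; Z_k(H \setminus \{i\}) + Z_k(H \setminus \{k\}).
\]
I would split this sum into pairs that include $i$ as an endpoint and pairs that do not. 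The $i$-endpoint pairs appear only on the left-hand side, and each contributes the non-negative quantity $z_{uv}(H) - z_{uv}(H \setminus \{k\})$, so they only aid the inequality. It then remains to establish the termwise inequality
\[
z_{uv}(H) + z_{uv}(H \setminus \{i, k\}) \;\ge\; z_{uv}(H \setminus \{i\}) + z_{uv}(H \setminus \{k\}) \qquad (\star_{uv})
\]
for every pair $(u, v)$ with $u, v \notin \{i, k\}$.

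To prove $(\star_{uv})$, I would apply Menger's theorem to the connectivity hypothesis. Either $i$ and $k$ are disconnected in $H$ (the degenerate case, handled by a straightforward component argument), or there exists a single separating vertex $c \notin \{i, k\}$ such that $H \setminus \{c\}$ places $i$ and $k$ in different components $V_i \ni i$ and $V_k \ni k$. In the latter case, no simple $u$-$v$ path in $H$ can use both $i$ and $k$, since that would require crossing $c$ twice. For pairs $(u, v)$ lying entirely on one side of $c$, in a third component of $H \setminus \{c\}$, or with $c$ itself as an endpoint, this observation forces $z_{uv}(H) = z_{uv}(H \setminus \{k\})$ or $z_{uv}(H) = z_{uv}(H \setminus \{i\})$, trivializing $(\star_{uv})$. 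In the remaining ``straddling'' case $u \in V_i$, $v \in V_k$ (or its symmetric version), every $u$-$v$ flow bottlenecks at $c$, yielding the decomposition $z_{uv}(H) = \min(\alpha, \beta)$, where $\alpha$ is the max $u$-to-$c$ flow on the subgraph induced by $V_i \cup \{c\}$ and $\beta$ is the max $c$-to-$v$ flow on the subgraph induced by $V_k \cup \{c\}$. Removing $i$ drops $\alpha$ to some $\alpha' \le \alpha$ while leaving $\beta$ intact; removing $k$ symmetrically drops $\beta$ to $\beta' \le \beta$ only. Inequality $(\star_{uv})$ then reduces to the elementary fact $\min(\alpha, \beta) + \min(\alpha', \beta') \ge \min(\alpha', \beta) + \min(\alpha, \beta')$, which I would verify by a short case split on the four possible orderings.

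The main obstacle I anticipate is the bottleneck decomposition step: one must show that every $u$-$v$ max flow can be rerouted so that each flow-carrying simple path uses $c$ exactly once, which relies on the flow-decomposition theorem together with a $c$-cycle cancellation argument. A secondary subtlety is the interpretation of ``vertex-disjoint paths'' in a directed graph so that the Menger-style cut vertex $c$ exists and separates $u$-$v$ flows in the appropriate sense; the cleanest reading, consistent with the adjacent footnote and the unit-capacity undirected treatment in \cite{NetworkDisruption}, is connectivity in the underlying undirected graph. Under that reading, the above outline goes through without further complication, and the termwise supermodularity together with the non-negative $i$-endpoint terms closes the proof.
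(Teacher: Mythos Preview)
Your argument is correct and shares the paper's core idea—the Menger cut vertex $c$ separating $i$ from $k$ and the resulting component decomposition—but you package it more carefully than the paper does. The paper's proof never writes down $(\star_{uv})$ or the supermodularity rearrangement; it jumps straight to the three placements of $(s,t)$ relative to the cut vertex and, in the straddling case, simply asserts that ``the removal of vertex $i$ may reduce the flow between $s-\ldots-v$, but the remainder of the path $v-\ldots-t$ is unaffected, thus no additional flow can be routed through $k$.'' Your explicit bottleneck identity $z_{uv}=\min(\alpha,\beta)$ together with the elementary inequality $\min(\alpha,\beta)+\min(\alpha',\beta')\ge\min(\alpha',\beta)+\min(\alpha,\beta')$ is exactly what is needed to make that step rigorous. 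You also isolate the $i$-endpoint pairs and dispose of them via nonnegativity, which the paper glosses over entirely. The price is that you must invoke flow decomposition and cycle cancellation at $c$, which the paper sidesteps by keeping the argument informal; but as you note, once ``vertex-disjoint'' is read in the underlying undirected graph (the paper's intended reading), that lemma is standard. One small tightening: when you say $z_{uv}(H)=z_{uv}(H\setminus\{k\})$ ``trivializes'' $(\star_{uv})$ in the same-side case, you are implicitly also using $z_{uv}(H\setminus\{i\})=z_{uv}(H\setminus\{i,k\})$, which follows from the same component argument—worth stating both equalities explicitly.
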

\begin{proof}

The proof of this can be found in Appendix \ref{sec:AppNoCycleThm}.
\end{proof}

Put simply, the existence of only one vertex-disjoint path between $i$ and $k$ means that $i$ and $k$ do not lie on a cycle.  Therefore when $i$ is removed, any $s-t$ paths that previously passed through $i$ cannot be rerouted through any alternate path passing through $k$.   

Note that identifying vertices that do not have at least two vertex-disjoint paths to $k$ is computationally straightforward.  We can solve an all $u-k$ pairs maximum flow problem on a related graph $\hat{G}$ in which every vertex $u$ is replaced with a pair of vertices connected by a unit capacity edge: $(u,u')$.   For every directed edge $i-j$ in the original graph, we include directed edge $(i',j)$ in the modified graph. Through the use of a Gomory-Hu tree, we can solve this in $O(|V|^3\sqrt{|E|})$ time \citep{GH1961, G1990}.  Any vertex $u$ corresponding to vertex $u^{'}$ in $\hat{G}$ that has a maximum $u^{'}-k$ flow of one in $\hat{G}$ does not have at least two vertex-disjoint paths to $k$ in the original graph and can be ignored by any removal subset.  We call the set of such vertices, $Q$.  Every vertex in $Q$ should be maintained in the graph and not be considered for removal.  

These properties show that when seeking a vitality-maximizing subset for removal, we can ignore all subsets that include:
\begin{itemize}
    \item vertices in $Q$   (i.e. they do not share a cycle with $k$);      \item more than $|N(k)|-2$ of $k$'s neighbors.
    \end{itemize}

After performing preprocessing on the graph to identify $N(k)$ and $Q$, we can add the following constraints to the MIP formulation:  

\begin{equation}
\begin{array}{ll}
& z_i = 1, \forall i \in Q \\
& \displaystyle\sum\limits_{\substack{i \in N(k)}} z_i \geq 2 \\
& \\
\end{array}
\label{FinalOptimizationProgram_reduced}
\end{equation}

Although the above constraints provide a tighter formulation for VIMAX, the anticipated benefits of these constraints are likely to be modest.  Table \ref{table:simplify} shows $|Q|$ (the number of vertices that do not have at least two vertex-disjoint paths to $k$) for each graph used for testing in Section \ref{sec:compAnal}.

\begin{table}
\begin{center}
{\small
\begin{tabular}{|c|c|c|c||c||c|c| c | c|}
\hline
\multicolumn{2}{|c|}{}			&		&		&		&		&		&		&		\\
\multicolumn{2}{|c|}{}			&		&		&		&		&		&	\% Decr	&	\% Inc	\\
\multicolumn{2}{|c|}{Graph Type} 			&	$|V|$	&	$|E|$	&	$|Q|$	&	$|\hat{V}|$	&	$|\hat{E}|$	&	Time	&	Obj	\\ \hline
\multirow{6}*{Drug Network} 	&	\multirow{3}*{unit cap.} 	&	28	&	40	&	14	&	18	&	30	&	93.19	&	0.00	\\
	&		&	28	&	40	&	14	&	18	&	30	&	94.68	&	0.00	\\
	&		&	28	&	40	&	13	&	18	&	30	&	91.83	&	0.00	\\ \cline{2-9}
	&	\multirow{3}*{gen. cap.} 	&	28	&	40	&	14	&	20	&	32	&	89.40	&	0.00	\\
	&		&	28	&	40	&	14	&	20	&	32	&	92.91	&	0.00	\\
	&		&	28	&	40	&	13	&	20	&	32	&	90.98	&	0.00	\\ \hline
\multirow{12}*{Random} 	&	\multirow{3}*{$n = 25$} 	&	25	&	40	&	23	&	12	&	11	&	99.98	&	0.00	\\
	&		&	25	&	40	&	1	&	25	&	40	&	62.92	&	0.00	\\
	&		&	25	&	40	&	6	&	24	&	38	&	23.98	&	0.00	\\ \cline{2-9}
	&	 \multirow{3}*{$n=36$} 	&	36	&	60	&	33	&	5	&	4	&	99.99	&	0.00	\\
	&		&	36	&	60	&	4	&	35	&	59	&	-	&	13.56	\\
	&		&	36	&	60	&	8	&	35	&	59	&	-	&	0.00	\\ \cline{2-9}
	&	 \multirow{3}*{$n = 49$} 	&	49	&	84	&	6	&	48	&	83	&	-	&	0.00	\\
	&		&	49	&	84	&	7	&	47	&	82	&	-	&	0.00	\\
	&		&	49	&	84	&	7	&	48	&	83	&	-	&	0.00	\\ \cline{2-9}
	&	 \multirow{3}*{$n=64$} 	&	64	&	112	&	8	&	64	&	112	&	-	&	0.00	\\
	&		&	64	&	112	&	10	&	63	&	111	&	-	&	0.00	\\
	&		&	64	&	112	&	10	&	63	&	111	&	-	&	0.00	\\ \hline

\end{tabular}} \\
\caption{Improvement in key VIMAX instance size parameters by identifying vitality-reducing vertices and using graph-simplification.  $|Q|$ is the number of vertices that do not have at least two vertex-disjoint paths to $k$; vertices in $Q$ can be ignored by VIMAX (see Section \ref{ss:vitred}).  $|\hat{V}|$ and $|\hat{E}|$ are the numbers of vertices and edges, respectively, in the reduced graph after applying the graph simplification method of Section \ref{ss:simplification}. The last two columns report the percentage decrease in time and percentage increase in best objective function value of the graph simplification method compared to the Multi-Removal MIP results reported in Table \ref{table:fullVIMAX}. Entries denoted by '-' indicate instances in which the MIP did not terminate within two hours.}
\label{table:simplify}
 \end{center}
 \end{table}

Unsurprisingly given their structure, all the vertices in the grid networks have at least two vertex-disjoint path to $k$; thus none of these vertices can be eliminated from consideration and are omitted from Table~\ref{table:simplify}.  By contrast, the sparse drug trafficking network has nearly half of its vertices that do not have at least two vertex-disjoint paths to the key vertex; this is a significant reduction in the number of candidate vertices for removal, but VIMAX was readily tractable on this already-small network.  
Thus, this criterion alone is unlikely to render previously intractable MIP instances tractable.

\subsection{Simplifying the Graph} \label{ss:simplification} 

Because VIMAX grows rapidly in the number of vertices, we can improve the computational tractability of VIMAX by simplifying our original graph into a vitality-preserving graph having fewer vertices. We rely heavily on Theorem~\ref{NoCycleThm} to do this.

Suppose that a vertex $v$ disconnects the graph into two components $T_1$ and $T_2$ such that $k\in T_1$. Then, by Theorem~\ref{NoCycleThm}, an optimal solution will not contain any vertex in $T_2$. Further, the maximum flows between pairs of vertices within $T_2$ do not contribute to the vitality effect on $k$. Therefore, all that is needed to preserve the vitality effect on $k$ in the simplified graph is to preserve information about the maximum flow between all pairs of vertices $s,t$ such that $s \in T_1$ and $t \in T_2$. 

For all vertices $t \in T_2$ we create a single edge between $t$ and $v$ with capacity equal to the maximum flow between $t$ and $v$. This replaces all previous edges between vertices in $T_2$. This affects the value of the all-pairs maximum flow problem but does not affect the vitality effect on $k$ for any subset $S \subset T_1$. Further, if any subset of vertices $T' \subseteq T_2$ all have the same new capacity value, we combine $T'$ into a single vertex with weight $|T'|$. When calculating the maximum flow between any pair of vertices $s$ and $t$ in the graph, we multiply the flow by the product of the weights of the vertices to account for this simplification.

Using the process described in the previous section, we can identify the subset of vertices $Q \subseteq V \setminus \{k\}$ that do not have at least two vertex-disjoint paths to $k$. Given a vertex $i \in Q$, we find a path from $i$ to $k$ and find the first vertex $v$ along $i$'s path to $k$ such that $v$ has at least two vertex-disjoint paths to $k$.  Removing the vertex $v$ disconnects the graph. Therefore, we follow the simplification process above and mark all vertices in the corresponding $T_2$, including $i$, as processed. We then repeatedly identify any unprocessed vertex in $Q$ to further simplify the graph. After all vertices in $Q$ have been processed, all these vertices will be weighted leaves in the new simplified graph where the weight depends on how many vertices have been combined. All other vertices will retain a weight of one.

Figure~\ref{fig:SimplifiedGraph} shows an example of this simplification process in which there are two components that have been simplified. Note that vertices 4, 6, and 7 have been combined together into a vertex with weight three. Further, vertices 5 and 8 have been combined together into a vertex with weight two.

\begin{figure}
\begin{center}
\includegraphics[width=12cm]{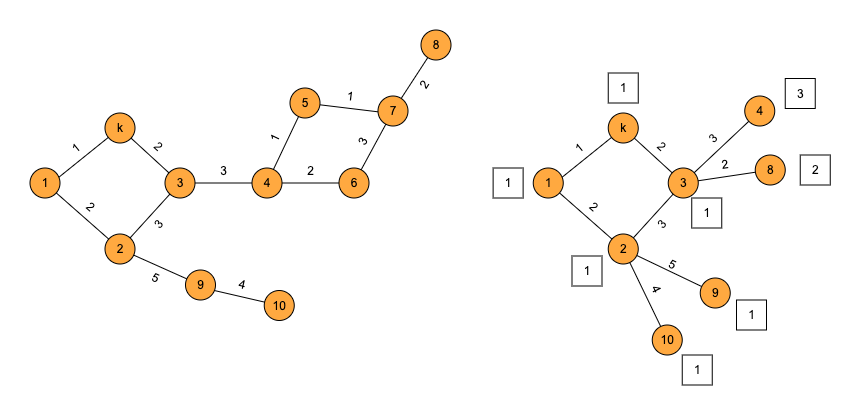}
\caption[An example of how a graph can be simplified by condensing components that do not have at least two vertex-disjoint paths to $k$]{\label{fig:SimplifiedGraph} An example of a graph (left) and its simplified version (right) with vertex weights. Vertices 4, 6, and 7 have been combined together into a vertex with weight three. Further, vertices 5 and 8 have been combined together into a vertex with weight two.}
\end{center}
\end{figure}

As argued above, the maximum flow between all pairs of vertices that were in the same simplified component never contribute to the vitality effect on $k$. Therefore, we ignore these pairs in the optimization problem by removing the appropriate variables and constraints. We therefore just need to check that we have preserved the maximum flow between all pairs of vertices that were not in the same component. This is true by nature of the weights which are multiplied. For example, in Figure~\ref{fig:SimplifiedGraph}, we multiply by weight 4 for the maximum flow between vertex 4 and vertex 1, accounting for all the paths between vertices 4, 6, and 7 and vertex 1. Thus, our optimization problem still finds an optimal subset to remove on the simplified graph that is optimal in the original graph. The number of pairs of vertices decreases from 45 to 19 since the number of vertices excluding $k$ decreases from 10 to 7 and we can ignore the flow between vertices 9 and 10 and between vertices 4 and 8 in the simplified graph.

Table \ref{table:simplify} shows the number of vertices ($|\hat{V}|$) and edges ($|\hat{E}|)$ in each test graph after applying the graph simplification algorithm.  The only graph types experiencing an appreciable reduction in size after simplification are the drug trafficking network and the smaller random graphs.  We posit that highly connected graphs such as the grid networks 
are less amenable to the simplification method than sparser networks. In Table \ref{table:simplify} we also include the percentage decrease in time and percentage increase in the best objective function value found via graph simplification to the Multi-Removal MIP removal results reported in Table~\ref{table:fullVIMAX}. The time includes the time to perform the graph simplification, which is very efficient. For graphs with a significant reduction in the number of nodes and edges, we see a corresponding decrease in the runtime for the MIP. For the larger networks that did not terminate within the time limit, we only see the best vitality found improve in one instance.

\section{Future Work and Conclusions} \label{sec:conc}

In this paper we have presented the VIMAX optimization problem that identifies a subset of vertices whose removal maximizes the volume of flow passing through a key vertex in the network.  VIMAX is NP-Hard. We have used the dualize-and-combine method of \cite{W1993} to formulate VIMAX as a mixed integer linear program, and we compared its performance to that of a simulated annealing heuristic.  We also demonstrated how identifying vertices not having at least two vertex-disjoint paths to the key vertex can be used to simplify the graph and reduce computation time on certain graph types.

Additionally, this paper opens up a rich area of future research.

\begin{itemize}
\item Computational improvements - Graph Simplification: Additional properties of vitality-reducing vertices, such as those outlined in \cite{paul2012} for the unit capacity case, could be derived for the general capacity case and used to preprocess or simplify the graph to reduce the solution space of VIMAX.  In particular, it would be beneficial to identify small cuts in the graph such that all vertices on the other side of the cut as $k$ can be ignored from consideration.

\item Computational improvements - Bender's Decomposition: Because the number of constraints in the VIMAX MIP grows on the order of $O(|E||V|^2)$, we can use Bender's decomposition algorithm to solve our problem for large graphs. The decomposition is presented in Appendix~\ref{sec:benders}, but preliminary testing did not improve the MIP performance. The survey of Smith and Song illustrates a variety of approaches that could be applied to improve the performance of the Bender's decomposition of VIMAX \cite{SmithSong2020_EJOR}.

\item Optimization: In this paper, we have focused on identifying vertices having high vitality effect on the key vertex without considering the cost or difficulty of removing them from the graph.  An enhancement to VIMAX could include a budget constraint restricting the choice of subsets based on the difficulty of their removal.  
\item Game theory and dynamic response: The disruption technique described in this paper focuses on the network at one snapshot in time and assumes that any subset removal occurs simultaneously and that the network remains static. Extensions to VIMAX might explore cascading effects of sequential vertex removal, similar to the literature on multi-period interdiction \citep{Enayaty-AhangarRainwaterSharkey}, cascading failures \citep{CLM, MoL,ZPLY}, agent-based models for counter-interdiction responses \cite{MaglioccaEtAl_2019}, and game theoretic responses of the network to disruptions, such as adding new edges.  
\item Imperfect information: The VIMAX formulation presented here assumes complete and perfect knowledge of the network's structure.  However, the complete structure of a covert network is typically not known to enforcement agencies, and can evolve rapidly \cite{KonradTrappPalmbach}.  Future work could address applying VIMAX to networks with uncertain or unknown structure. 
\item Robust network design: We can use the results of this research to design networks, such as telecommunication and other infrastructure networks, to be robust to vitality-diverting attacks \citep{CLMR}.
\item Multiple key vertices: In the case that we want to maximize the flow through a subset $S$ of key vertices, we can extend the definition of vitality maximization to maximize the all-pairs vitality of $S$. The MIP and simulated annealing algorithm can be updated accordingly.
\end{itemize}

VIMAX has broad applicability to problems including disrupting organized crime rings, such as those used in terrorism, drug smuggling and human trafficking; disrupting telecommunications networks and power networks; as well as robust network design. 

\section{Acknowledgements}
This work used the Extreme Science and Engineering Discovery Environment (XSEDE) \citep{xsede}, which is supported by National Science Foundation grant number ACI-1548562.  Specifically, this work used the XSEDE Bridges-2 Extreme Memory and Regular Memory supercomputers at the Pittsburgh Supercomputing Center through allocation MTH210021.  We thank consultant T. J. Olesky for their assistance troubleshooting batch calls to AMPL, which was made possible through the XSEDE Extended Collaborative Support Service (ECSS) program \citep{ecss}.  The authors would also like to acknowledge Doug Altner, Michael Ernst, Elizabeth Ferme, Sam Gutekunst, 
Danika Lindsay, Yaniv Ovadia,  
Sean Plott, and Andrew S. Ronan for their contributions to early efforts in this work \citep{NetworkDisruption, GutekunstThesis, OvaThesis}.  This work was supported by  the National Science Foundation Research Experiences for Undergraduates program (NSF-DMS-0755540).

%
 \appendix
 
 \section{Proof of Theorem \ref{NPHard}}
\label{sec:AppNPHard}
In this section we prove Theorem \ref{NPHard} stating that the all-pairs vitality maximization problem is NP-Hard.  Our proof extends the proof of \cite{OvaThesis} for the special case of undirected, unit-capacity edges. We first restate VIMAX as a decision problem: For a fixed value $C$, does there exist a subset $S$ such that $\mathcal{L}_k(G\setminus S) \geq C$? 

\begin{theorem}
The all-pairs vitality maximization problem is NP-Hard.
\end{theorem}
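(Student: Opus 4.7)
The goal is to show that deciding whether there exists $S \subseteq V$ with $\mathcal{L}_k(G\setminus S) \geq C$ is NP-hard. My first observation is that the decision version of VIMAX lies in NP: given a candidate $S$, one can compute $\mathcal{L}_k(G\setminus S) = Z_k(G\setminus S) - Z_k((G\setminus S)\setminus\{k\})$ in polynomial time by running $O(|V|^2)$ max-flow computations and comparing the difference to $C$. So the real work is the hardness side.

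For hardness, the plan is to extend the reduction of \cite{OvaThesis}, which establishes NP-hardness for the undirected unit-capacity version of VIMAX, to the directed general-capacity setting. The cleanest route is a \emph{lifting} argument. Given any undirected unit-capacity VIMAX instance $(G,k,C,m)$ produced by Ovadia's reduction from a canonical NP-hard problem, I would construct a directed general-capacity instance $(G',k,C,m)$ by replacing each undirected edge $\{u,v\}$ with two parallel arcs $(u,v)$ and $(v,u)$, each of capacity one. The key claim to verify is that $z_{st}(G) = z_{st}(G')$ for every ordered pair $s \neq t$; this reduces to the classical fact that in a unit-capacity undirected graph the maximum $s$-$t$ flow equals the maximum number of edge-disjoint paths, and in the bidirected graph $G'$ any directed flow can be canceled along two-cycles so that each arc-pair is used in at most one direction without decreasing the net value. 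Summing over pairs yields $Z_k(G) = Z_k(G')$; applying the same argument after deleting $k$ gives $\mathcal{L}_k(G) = \mathcal{L}_k(G')$. Since the argument applies uniformly to $G \setminus S$ for every $S$, the sets of feasible removal subsets and their vitality values coincide in the two instances.

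The lifting step is clearly polynomial (linear in $|E(G)|$), so composing Ovadia's reduction with the lifting produces a polynomial-time many-one reduction from the same NP-hard source problem into directed general-capacity VIMAX. The main obstacle is executing the flow-equivalence verification carefully enough that it transfers not just to a single max-flow pair but to the full vitality function, uniformly across all subsets $S$; in particular one must rule out "cheating" flows that saturate both arcs of a bidirected pair. This is handled by the cancellation argument described above, but the bookkeeping needs to be done once and for all so that $Z_k$ and hence $\mathcal{L}_k$ are preserved exactly. If for some reason Ovadia's underlying gadget exploited undirected-specific features that do not survive lifting, a fallback would be to rebuild the reduction natively in the directed general-capacity world (for example, from \textsc{Clique} by attaching vertex-gadgets to $k$ through high-capacity bottleneck arcs whose saturation patterns force an optimal $S$ to correspond to a maximum clique); however, I expect the lifting to be sufficient and substantially shorter.
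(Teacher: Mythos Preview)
Your lifting argument is sound: replacing each undirected unit-capacity edge $\{u,v\}$ by the two arcs $(u,v)$ and $(v,u)$ preserves every $s$--$t$ max-flow value (the min-cut capacities coincide, or equivalently any bidirected flow can have its two-cycles cancelled), deletion of a vertex commutes with bidirection, and hence $\mathcal{L}_k(G\setminus S)=\mathcal{L}_k(G'\setminus S)$ for every $S$. Composing with Ovadia's reduction gives a polynomial reduction into directed general-capacity VIMAX. The only bookkeeping you gloss over is the ordered-versus-unordered summation in $Z_k$, which introduces a factor of $2$ in the threshold $C$; this is harmless but should be stated.

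The paper takes a different route: rather than black-boxing the undirected result, it gives a self-contained reduction from \textsc{3SAT} directly in the directed, general-capacity setting, building variable gadgets $\{a_i,b_i,t_i,f_i\}$ and clause gadgets $\{u_j,v_j\}$ around a spine $d_1$--$k$--$d_2$, and padding $d_1,d_2$ with many leaves so that the $d_1$--$d_2$ vitality dominates all other terms. Your approach is shorter and more modular, but it leans entirely on an unpublished thesis and, as written, only certifies hardness on the restricted class of bidirected unit-capacity instances (which is of course enough). The paper's construction is longer but stands on its own, actually exercises non-unit capacities in the gadget (e.g.\ the edge $(k,d_2)$ has capacity $n+m$), and makes the combinatorics of why a satisfying assignment corresponds to a high-vitality deletion set fully explicit. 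If you want your proof to be citable without chasing down \cite{OvaThesis}, you would need to at least sketch the source reduction; your ``fallback'' of rebuilding a native reduction is essentially what the paper does.
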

\begin{proof}

We use a reduction from the 3-Satisfiability problem (3SAT). Given an instance of 3SAT with $n$ boolean variables $x_1, x_2, \ldots, x_n$ and $m$ clauses in 3-conjunctive normal form $c_1, c_2, \ldots, c_m$, the 3SAT decision problem is whether there is an assignment of variables to true/false values such that all clauses are satisfied. As an example with three variables, any assignment with $x_3$ set to false would satisfy the two clauses $(x_1 \text{ or } \overline{x_2} \text{ or } \overline{x_3})$ and $(\overline{x_1} \text{ or } x_2 \text{ or } \overline{x_3})$.

Given an instance of 3SAT, we construct a corresponding instance of VIMAX. We start building our directed graph $G$ with three vertices $d_1$, $k$ (the key vertex), and $d_2$ with an edge from $k$ to $d_2$ with capacity $n+m$. Further, for each variable $x_i$ we create four vertices $\{a_i, b_i, t_i, f_i\}$ and add edges $(d_1, a_i)$, $(a_i, t_i)$, and $(a_i, f_i)$ each with capacity two and edges $(t_i, b_i)$, $(f_i, b_i)$, $(t_i, d_2)$, $(f_i, d_2)$, and $(b_i, k)$ each with capacity one.

Then, for each clause $c_j$, we create two variables $u_j$ and $v_j$ and add unit capacity edges $(d_1, u_j)$ and $(v_j, k)$. To encode this clause, for each variable $x_i$ in clause $c_j$ we add unit edges $(u_j, t_i)$ and $(t_i, v_j)$;  for each variable $\overline{x_i}$ in clause $c_j$ we add unit edges $(u_j, f_i)$ and $(f_i, v_j)$. Last, we create $M = 8 \cdot (m+n+n \cdot m)$ leaves with unit edges to $d_1$ and $M$ leaves with unit edges from $d_2$ and set $C=(M+1)^2(n+m)$. An example graph of a single-clause, three-variable, 3SAT problem having clause ($\overline{x_1}$ or $x_2$ or $\overline{x_3}$) is given in Figure~\ref{fig:np_graph}. 

\begin{figure}
\begin{center}
\includegraphics[width=12cm]{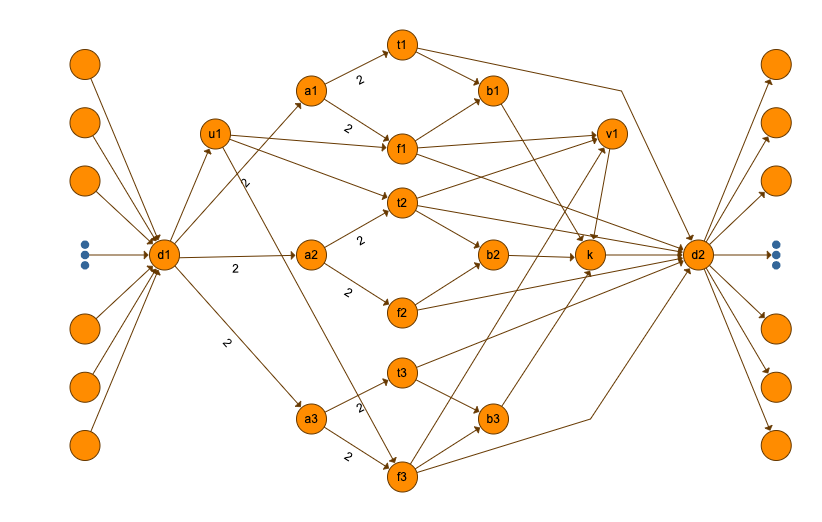}
\caption[Graph representation of a single clause 3SAT problem with clause ($\overline{x_1}$ or $x_2$ or $\overline{x_3}$).]{\label{fig:np_graph} Graph representation of a single clause 3SAT problem with three variables and the clause ($\overline{x_1}$ or $x_2$ or $\overline{x_3}$).  All edge capacities equal one except where indicated otherwise.}
\end{center}
\end{figure}

Note that the leaves adjacent to $d_1$ and $d_2$ essentially increase the weight of the flow between $d_1$ and $d_2$. In particular, if we define 
$$\mathcal{L}^{s,t}_k(G \setminus S) := z_{st}(G \setminus S) - z_{st}(G\setminus (S \cup \{k\}))$$
and let $V'$ be all vertices excluding these leaves as well as $d_1$, $d_2$, and $k$, then we can rewrite the all-pairs vitality as
\begin{align*}
& \mathcal{L}_k(G \setminus S) \\
& = \sum_{\substack{s,t \in V \setminus (S \cup \{k \}) \\ s \neq t}}  \mathcal{L}^{s,t}_k(G \setminus S) \\
& = (M+1)^2 \mathcal{L}^{d_1,d_2}_k(G \setminus S) + (M+1) \sum_{\substack{s \in V' \setminus S }}  \left[\mathcal{L}^{d_1,s}_k(G \setminus S) +  \mathcal{L}^{s,d_2}(G \setminus S) \right] + \sum_{\substack{s,t \in V' \setminus S \\ s \neq t}}  \mathcal{L}^{s,t}_k(G \setminus S) \\
& = (M+1)^2 \mathcal{L}^{d_1,d_2}_k(G \setminus S) + (M+1) \sum_{\substack{s \in V' \setminus S }}   \mathcal{L}^{s,d_2}(G \setminus S)  .
\end{align*}
The last line holds since paths from $d_1$ to $s \in V'\setminus S$ or between $s$ and $t \in V' \setminus S$ cannot travel through $k$. Further, we can bound the second half of the sum above by bounding the vitality by the capacity out of the starting node for each maximum flow. 
\begin{align*}
(M+1) \sum_{\substack{s \in V' \setminus S }}   \mathcal{L}^{s,d_2}(G \setminus S)  & \leq (M+1)(4m+9n+2m\cdot n) \\
& \leq  \frac{1}{2} (M+1)^2.
\end{align*}
This shows that the maximum flow from pairs that are not $\{d_1, d_2\}$ contributes a trivial amount to the overall vitality. Therefore, finding a subset such that $\mathcal{L}_k(G \setminus S) \geq C = (M+1)^2(n+m)$ is equivalent to finding a subset $S$ such that $\mathcal{L}^{d_1,d_2}_k(G \setminus S) \geq n+m$. 

We now show that given an assignment of variables to boolean values that satisfy all clauses, we can find an equivalent subset $S$ such that $\mathcal{L}^{d_1, d_2}_k(G \setminus S) \geq n+m$. Let $S$ contain $t_i$ for all $i$ such that $x_i$ is set to false and $f_i$ for all $i$ such that $x_i$ is set to true. 

Consider the maximum flow between $d_1$ and $d_2$ in $G \setminus S$. For each variable $x_i$ such that $t_i \in S$, we send two units of flow: one along the path ($d_1$--$a_i$--$f_i$--$b_i$--$k$--$d_2$) and one along the path ($d_1$--$a_i$--$f_i$--$d_2$). If, instead, $f_i \in S$, then the paths change to use $t_i$ instead of $f_i$. Further for each clause $j$, since this clause is satisfied, there exists at least one vertex $t_i$ or $f_i$ adjacent to $u_j$ that is not in $S$. Without loss of generality, let this vertex be $t_i$. We send one unit of flow along the path ($d_1$--$u_j$--$t_i$--$v_j$--$k$--$d_2$). The overall flow has value $2n+m$. Since all edges adjacent to $d_1$ are saturated, this is a maximum flow. 

Now consider the maximum flow between $d_1$ and $d_2$ in $G \setminus (S \cup \{k\})$. For each variable $x_i$ such that $t_i \in S$, we send one unit of flow along the path ($d_1$--$a_i$--$f_i$--$d_2$). If, instead, $f_i \in S$, then the path changes to use $t_i$ instead of $f_i$. The overall flow has value $n$. Since all edges adjacent to $d_2$ are saturated in  $G \setminus (S \cup \{k\})$ this is a maximum flow. This shows that $\mathcal{L}^{d_1, d_2}_k(G \setminus S) \geq n+m$.

We must now show the reverse direction to complete the proof. Suppose that we have found a subset $S$ such that $\mathcal{L}_k(G\setminus S) \geq C$. Then, given that all pairs except $d_1$ and $d_2$ contribute at most $\frac{1}{2}(M+1)^2$ to the vitality, it must be the case that $\mathcal{L}^{d_1,d_2}_k(G \setminus S) \geq n+m$. We  decompose the flow into unit flow paths from $d_1$ to $d_2$. Let $f(s,t)$ be the number of these paths that go from $s$ to $t$ in the maximum flow from $d_1$ to $d_2$ in $G \setminus S$ and $f'(s,t)$ be the number of paths from $s$ to $t$ in the maximum flow between $d_1$ and $d_2$ in $G \setminus (S \cup \{k\})$. Then,
\begin{equation}
    \mathcal{L}^{d_1,d_2}_k(G\setminus S) = \sum_{i=1}^n [f(a_i,d_2)-f'(a_i,d_2)] + \sum_{j=1}^m [f(u_j,d_2)-f'(u_j,d_2)] .
\label{eqn:np_flow}
\end{equation}
For the first term in Equation~\ref{eqn:np_flow}, we can verify that $[f(a_i,d_2)-f'(a_i,d_2)]\leq 1$ if exactly one of $t_i$ and $f_i$ is in $S$ and $\{a_i, b_i\} \cap S = \emptyset$ and at most zero otherwise. In particular, if $t_i$ and $f_i$ are both  in $S$ then $f(a_i,d_2)=f'(a_i, d_2) = 0$. If both $t_i$ and $f_i$ are not in $S$, then at most two units of flow can go from $a_i$ to $d_2$ in both graphs and both $t_i$ and $f_i$ can avoid using vertex $k$. Only when exactly one of $t_i$ or $f_i$ has been chosen will at least one path be forced to go through vertex $k$. For the second term, each term is also at most one given the unit capacity of the edge from $d_1$ into $u_j$. Therefore, 
\begin{equation}
   \mathcal{L}^{d_1,d_2}_k(G\setminus S) =\sum_{i=1}^n [f(a_i,d_2)-f'(a_i,d_2)] + \sum_{j=1}^m [f(c_j,d_2)-f'(c_j,d_2)] \leq n+m.
\label{eqn:np_flow2}
\end{equation}
Since $\mathcal{L}^{d_1,d_2}_k(G\setminus S) \geq n+m$ this implies equality throughout and that $|\{t_i, f_i \} \cap S| = 1$ for all $i = 1, 2, \ldots, n$. For each variable for which $t_i$ is in $S$, we set that variable to false. Otherwise, we set the variable to true. Last, in order for every clause to contribute at least one to the overall vitality, $u_j$ must be adjacent to some $t_i$ or $f_i$ not in $S$. Given the design of our network, this indicates that the assignment satisfies that clause. 

Overall, this shows that every 3SAT decision problem can be reduced to a VIMAX decision problem and that VIMAX is NP-Hard. 
\end{proof}

\section{Proof of Theorem \ref{NoCycleThm}}
\label{sec:AppNoCycleThm}

Here we prove Theorem \ref{NoCycleThm} stating that the removal of any vertex not having at least two vertex-disjoint paths to the key vertex $k$ can never increase the vitality of $k$.

\begin{theorem}
Let $G$ be a graph with key vertex $k$, and let $i$ be a vertex such that there do not exist at least two vertex-disjoint paths starting at $i$ and ending at $k$. Let $S$ be any vertex subset containing $i$, and let $T = S \setminus \{i\}$. Then, $\mathcal{L}_k(G \setminus S) \leq \mathcal{L}_k(G \setminus T)$. Therefore, $T$ will have at least as large a vitality effect on $k$ as $S$.
\end{theorem}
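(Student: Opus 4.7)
The plan is to reduce the claim to a single-vertex comparison and then decompose the all-pairs vitality difference into pair-wise contributions. Since $S = T \cup \{i\}$, setting $H := G \setminus T$ reduces the inequality to $\mathcal{L}_k(H \setminus \{i\}) \leq \mathcal{L}_k(H)$; the hypothesis on $i$ and $k$ is preserved under deletion of $T$ because removing vertices cannot create new paths. Writing $\mathcal{L}_k^{st}(H') := z_{st}(H') - z_{st}(H' \setminus \{k\})$ for the $s$-$t$ contribution to the vitality of $k$ in a subgraph $H'$ and splitting the sum defining $\mathcal{L}_k(H)$ into pairs that do and do not involve $i$, I obtain
\[
\mathcal{L}_k(H) - \mathcal{L}_k(H \setminus \{i\}) \;=\; \sum_{\substack{s,t \notin \{i,k\} \\ s \neq t}} \Delta_{st} \;+\; \sum_{\substack{s=i \text{ or } t=i \\ s \neq t,\; k \notin \{s,t\}}} \mathcal{L}_k^{st}(H),
\]
where $\Delta_{st} := [z_{st}(H) - z_{st}(H\setminus\{i\})] - [z_{st}(H\setminus\{k\}) - z_{st}(H\setminus\{i,k\})]$. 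Each term in the second sum is a non-negative vitality, so the theorem reduces to showing $\Delta_{st} \geq 0$ for every pair $(s,t)$ with $s, t \notin \{i,k\}$.

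Next I would invoke Menger's theorem in the vertex-disjoint form for directed graphs, using the vertex-splitting reduction described in Section~\ref{ss:vitred}. The hypothesis yields either no directed $i$-to-$k$ path in $H$ or a single cut vertex $v \notin \{i,k\}$ that lies on every such path. Let $V_i$ be the set of vertices from which $k$ is reachable in $H$ only by going through $v$ (so $i \in V_i$), and let $V_k := V(H) \setminus (V_i \cup \{v\})$ (so $k \in V_k$). The goal is then to splice any max $s$-$t$ flows in $H \setminus \{i\}$ and $H \setminus \{k\}$ at $v$ into feasible flows in $H$ and $H \setminus \{i, k\}$ whose total value is at least $z_{st}(H\setminus\{i\}) + z_{st}(H\setminus\{k\})$; by max-flow optimality this is equivalent to $\Delta_{st} \geq 0$. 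The underlying intuition, matching the remark following the theorem statement, is that any augmenting $s$-$t$ path that gains throughput from $i$ cannot simultaneously gain from $k$---doing both would require an $i$-$k$ cycle that the hypothesis rules out---so the contributions of $i$ and $k$ to $s$-$t$ throughput are additive rather than substitutable.

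The main obstacle is the flow-splicing construction. Max-flow values are not submodular in the set of deleted vertices in general, so the cut vertex $v$ must do the structural work: it should allow any feasible flow to be decomposed into a pair of sub-flows glued at $v$, one supported on $V_i \cup \{v\}$ and one on $V_k \cup \{v\}$, and then let pieces taken from different subgraphs be recombined. Carrying this out rigorously---respecting the effective unit capacity at $v$ induced by the vertex-split construction, handling flows that visit $v$ more than once, and reconciling edge usages interior to $V_i$ or $V_k$---is the technical heart of the argument, and is precisely where the vertex-disjoint hypothesis on $i$ and $k$ must be used in full strength.
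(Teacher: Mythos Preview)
Your reduction to $H = G \setminus T$ and the decomposition of $\mathcal{L}_k(H) - \mathcal{L}_k(H\setminus\{i\})$ into the $\Delta_{st}$ terms plus the non-negative ``$i$-endpoint'' contributions are correct and mirror the paper's implicit structure. The divergence is in how you propose to finish. You aim for a uniform flow-splicing construction and flag it as the unresolved technical core; the paper instead dispatches $\Delta_{st}\ge 0$ by a short case split on the position of $s$ and $t$ relative to the cut vertex $v$, after which each case is nearly immediate. If $s$ and $t$ both lie on $i$'s side of $v$, any simple $s$--$t$ flow path reaching $k$ would have to traverse $v$ twice, so no such path exists, $k$ contributes nothing, and $\Delta_{st}=0$; symmetrically, if both lie on $k$'s side then $i$ contributes nothing. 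In the mixed case every simple $s$--$t$ path uses $v$ exactly once, so $z_{st}(\cdot)$ factors as $\min$ of an $s$--$v$ value depending only on the $i$-side and a $v$--$t$ value depending only on the $k$-side; then $\Delta_{st}\ge 0$ reduces to the elementary inequality $\min(a,b)+\min(a',b')\ge\min(a,b')+\min(a',b)$ for $a\ge a'$, $b\ge b'$. This location-based argument sidesteps the splicing entirely and is what you should supply in place of the open-ended plan.

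Two smaller corrections. The vertex-splitting-with-unit-capacities device is purely a \emph{detection} tool (it certifies, via Menger, that a single $v\notin\{i,k\}$ separates $i$ from $k$); it plays no role in the max-flow values that define $\Delta_{st}$, so there is no ``effective unit capacity at $v$'' to respect. And the worry about flows visiting $v$ repeatedly dissolves once you pass to a simple-path-plus-cycles decomposition: cycles carry no net $s$--$t$ value, and simple paths hit $v$ at most once, which is precisely what makes the factorization in the mixed case work.
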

\begin{proof}

Let $G$ be a graph with key vertex $k$ and let $i$ be a vertex such that there do not exist at least two vertex-disjoint paths starting at $i$ and ending at $k$.  Then there exists a cut vertex $v$ whose removal would disconnect the graph into at least two components.  We consider two cases, $v \neq i$ and $v = i$.

When $v \neq i$, then $v$ separates a component $G_k$ that includes $k$ from a component $G_i$ that includes $i$.  Consider the maximum flow between an $s-t$ pair ($s,t \neq k$).  
\begin{itemize}
\item If both $s$ and $t$ are in $G_i$, the flow between them is unaffected by the removal of vertex $k$, whether or not vertex $i$ is removed from the graph. This is because any optimal flow path that passes through vertex $k$ must first go into and out of vertex $v$, creating a flow cycle,  $s -  \ldots - v -\ldots - k -\ldots - v - \ldots - t$, and thus is equivalent to a flow path that avoids $G_k$ entirely, $s -  \ldots - v - \ldots - t$.
\item If both $s$ and $t$ are in $G_k$, their contribution to the vitality of $k$ is unaffected by the removal of $i$ by the same logic as above:  any optimal flow path that passes through vertex $i$ must go into and out of vertex $v$, creating a flow cycle, and thus is equivalent to a flow path that avoids $G_i$ entirely.
\item If, without loss of generality, $s \in G_i$ and $t \in G_k$, then the removal of vertex $i$ may reduce the flow between $s - \ldots - v$, but the remainder of the path $v - \ldots - t$ is unaffected.  Thus no additional flow can be routed through $k$ when $i$ is removed than when $i$ is present.
\end{itemize}

When $v = i$, then $i$ separates a component $G_k$ that includes $k$ from the remainder of the graph, $G_i$.  In this case, the removal of $i$ will eliminate all $s-t$ flow between $s \in G_i$ and $t \in G_k$, regardless of whether or not $k$ is in the graph.  Thus, no additional flow can be routed through $k$ when $i$ is removed from the graph than when $i$ is present.\end{proof}

\section{Benders Decomposition}
\label{sec:benders}

Because the number of constraints in the VIMAX MIP grows on the order of $O(|E||V|^2)$, we can use Benders decomposition algorithm to solve our problem for large graphs. 
In our case, the integer master problem chooses the subset of vertices to remove; this problem has relatively few variables and  constraints. Given a fixed removal subset, we are left with a large linear network flow subproblem that is guaranteed to have an integer optimal solution. 

We see in Equation~\ref{FinalOptimizationProgram}  constraints that couple $w_{i,j}$, $x_{i,j,s,t}$, $y_{i,j,s,t}$ and $y_{i,s,t}$. We let the $z_i$'s and $w_{i,j}$'s be the variables in our master problem. Our initial master problem contains only the constraints related to the $w_{i,j}$'s and $z_{i}$'s, representing the choice of subset to remove. Thus the master problem is

\begin{equation}
\begin{array}{ll}
\textrm{Maximize} &  \mathcal{L}_k \\
\textrm{subject to} & \\ 
& \sum_{i \in V} z_i \geq n-m \\
& z_k = 1 \\
& w_{i,j} \leq z_i, \forall (i,j) \in E \\
& w_{i,j} \leq z_j, \forall (i,j) \in E \\
& w_{i,j} \geq z_i + z_j -1, \forall (i,j) \in E\\

& \\

& z_i \textrm{ binary, } \forall i \in V \\
& w_{i,j} \geq 0, \forall (i,j) \in E\\
& \mathcal{L}_k \geq 0. \\
\end{array}
\label{MLIPMaster}
\end{equation}

Here, $\mathcal{L}_k$ represents the optimal vitality of $k$. It currently has no restrictions on its value. 

Solving Equation~\ref{MLIPMaster} determines a feasible $\mathbf{z}$ and $\mathbf{w}$, which we can use to compute the vitality of $k$ in the dual of the linear subproblem. When taking the dual we let $x^{'}_{i,s,t}$ be the dual variables corresponding to the flow balance constraints of the $x_{i,j,s,t}$'s and $x^{'}_{i,j,s,t}$ be the dual variables corresponding to the capacity constraints on the $x_{i,j,s,t}$'s. Similarly, we let $y^{'}_{i,j,s,t}$ be the dual variables corresponding to the edge constraints on $y_{i,j,s,t}$, and we let $y^{'}_{s,t}$ be the dual variables corresponding to the constraints on the relationship between $y_{s,s,t}$ and $y_{t,s,t}$. The linear subproblem becomes

\begin{equation}
\begin{array}{ll}
\textrm{Minimize} & \displaystyle\sum\limits_{\substack{s,t \in V' \\ s \neq t}} \displaystyle\sum\limits_{(i,j) \in E}  u_{i,j}w_{i,j} x^{'}_{i,j,s,t}+\displaystyle\sum\limits_{\substack{s,t \in V'\\ s \neq t}} y^{'}_{s,t} -  \displaystyle\sum\limits_{\substack{s,t \in V' \\ s \neq t}} \displaystyle\sum\limits_{(i,j) \in E'}(1-w_{i,j}) y^{'}_{i,j,s,t}\\
\textrm{subject to} & \\ 

& x^{'}_{i,s,t} - x^{'}_{j,s,t} + x^{'}_{i,j,s,t} \geq 0 , \forall (i,j) \in E  , \forall s,t \in V'\\
& -x^{'}_{s,s,t} + x^{'}_{t,s,t} \geq 1  , \forall s,t \in V' \\

& \displaystyle\sum\limits_{j:(i,j) \in E'} y^{'}_{i,j,s,t} - \displaystyle\sum\limits_{k:(k,i) \in E'} y^{'}_{k,i,s,t} = \begin{cases} y^{'}_{s,t} &\mbox{if } i = s \\ -y^{'}_{s,t} &\mbox{if } i = t \\ 0 &\mbox{otherwise} \end{cases} \forall i,s,t \in V' \\
& y^{'}_{i,j,s,t} \geq -u_{i,j}, \forall (i,j) \in E' , \forall s,t \in V'\\
& \\
& x^{'}_{i,j,s,t} \geq 0 , \forall (i, j) \in E , \forall s,t \in V' \\
& x^{'}_{i,s,t} \textrm{ unrestricted}, \forall i, s, t \in V' \\
& y^{'}_{s,t} \leq 0 , \forall s,t \in V' \\
& y^{'}_{i,j,s,t} \leq 0 , \forall (i, j) \in E' , \forall s,t \in V'. \\
\end{array}
\label{MLIPDualSubproblem}
\end{equation}

At the beginning of each iteration $c$, the master is solved and we obtain the optimal values for $z_i$ and $w_{i,j}$. Initially, we start with an infinite objective function and all $z_i = 1$. The dual of the linear subproblem, shown in Equation~\ref{MLIPDualSubproblem}, is then solved with the optimal $w_{i,j}$'s substituted in. 

If the subproblem is unbounded, simplex returns the extreme ray, defining $\mathbf{x}^{'}_c$ and $\mathbf{y}^{'}_c$, and we add the constraint

\[\displaystyle\sum\limits_{\substack{s,t \in V' \\ s \neq t}} \displaystyle\sum\limits_{(i,j) \in E} u_{i,j}w_{i,j} x^{'}_{i,j,s,t,c}+\displaystyle\sum\limits_{\substack{s,t \in V'\\ s \neq t}} y^{'}_{s,t,c} -  \displaystyle\sum\limits_{\substack{s,t \in V' \\ s \neq t}} \displaystyle\sum\limits_{(i,j) \in E'} (1-w_{i,j}) y^{'}_{i,j,s,t,c} \geq 0.\]

If the subproblem has an objective function value less than or equal to the incumbent value of $\mathcal{L}_k$, then we add in the constraint

\[\displaystyle\sum\limits_{\substack{s,t \in V' \\ s \neq t}} \displaystyle\sum\limits_{(i,j) \in E}  u_{i,j}w_{i,j} x^{'}_{i,j,s,t,c}+\displaystyle\sum\limits_{\substack{s,t \in V'\\ s \neq t}} y^{'}_{s,t,c} -  \displaystyle\sum\limits_{\substack{s,t \in V' \\ s \neq t}} \displaystyle\sum\limits_{(i,j) \in E'}(1-w_{i,j}) y^{'}_{i,j,s,t,c} \geq \mathcal{L}_k.\]

Otherwise, the algorithm terminates.

Preliminary testing of the Benders decomposition of VIMAX reveals the same problem that plagues large instances of the MIP formulation: the objective function values of the linear subproblems encountered are quite large compared to the objective function value of any feasible integer solution.  Thus, the cuts added do not adequately constrain the master problem.  Future work is needed to develop improved Benders decompositions.

%
%


\bibliographystyle{plainnat}
\bibliography{SocialNetsv7.bib} 


\end{document}